	\definecolor{azure}{rgb}{0.3, 0.4, 0.86}
\newtheorem{thm}{Theorem}[section]
\newtheorem{prop}[thm]{Proposition}
\newtheorem{lem}[thm]{Lemma}
\newtheorem{cor}[thm]{Corollary}
\newtheorem{conj}[thm]{Conjecture}
\theoremstyle{definition}
\newtheorem{problem}[thm]{Problem}
\newtheorem{question}[thm]{Question}
\newcommand\norm[1]{\left\lVert#1\right\rVert}
\newtheorem{remark}[thm]{Remark}
\numberwithin{equation}{section}
\begin{document}

\title{On creating convexity in high dimensions}
\subjclass{Primary: 52A05, 52A20 Secondary: 52A27, 49Q22, 20B30, 60E15}
\keywords{Convex set, convex hull, Carathéodory's theorem, Gaussian measure, optimal transport, Monge-Kantorovich duality, coupling, copula, permuton}

\author{Samuel G. G. Johnston}
\address{Department of Mathematics, King's College London}
\email{samuel.g.johnston@kcl.ac.uk}

\begin{abstract}
Given a subset $A$ of $\mathbb{R}^n$, we define
\begin{align*}
\mathrm{conv}_k(A) := \left\{ \lambda_1 s_1 + \cdots + \lambda_k s_k : \lambda_i \in [0,1], \sum_{i=1}^k \lambda_i = 1 , s_i \in A \right\}
\end{align*}
to be the set of vectors in $\mathbb{R}^n$ that can be written as a $k$-fold convex combination of vectors in $A$.
Let $\gamma_n$ denote the standard Gaussian measure on $\mathbb{R}^n$. We show that for every $\varepsilon > 0$, there exists a subset $A$ of  $\mathbb{R}^n$ with Gaussian measure $\gamma_n(A) \geq 1- \varepsilon$ such that for all $k = O_\varepsilon(\sqrt{\log \log(n)})$, $\mathrm{conv}_k(A)$ contains no convex set $K$ of Gaussian measure $\gamma_n(K) \geq \varepsilon$.
This result acts as a complement to the recent affirmative resolution of Talagrand's convexity conjecture by Hua, Song, and Tudose, which states that a universal dilation of the threefold Minkowski sum $A+A+A$ of a large set $A$ guarantees a large convex subset. Our approach utilises concentration properties of random copulas and the application of optimal transport techniques to the empirical coordinate measures of vectors in high dimensions.
\end{abstract}

\maketitle
\section{Introduction and overview}
\subsection{Background}
A subset $K$ of $\mathbb{R}^n$ is said to be convex if the convex combination $\lambda s + (1-\lambda)s'$ lies in $K$ for all $s,s' \in K$ and all $\lambda \in [0,1]$.
The convex hull $\mathrm{conv}(A)$ of a subset $A$ of $\mathbb{R}^n$ is the smallest convex set containing $A$.
Carathéodory's theorem in convex geometry states that the convex hull of $A$ consists precisely of the set of points in $\mathbb{R}^n$ that can be written as a convex combination of $n+1$ points in $A$. 

This article is motivated by a problem raised by Talagrand concerning whether, given a large set $A$, a reasonably large convex subset of $\mathbb{R}^n$ can be constructed from elements of $A$ using a fixed number $k$ of operations --- in such a way that $k$ is independent of the underlying dimension $n$. In \cite{talaPACM}, Talagrand asks the following:
\begin{quote} 
How many operations are required to build the convex hull of $A$ from $A$? Of course the exact answer should depend on what exactly we call  ``operation'',
but Carathéodory's theorem asserts that any point in the convex hull of $A$ is in the convex hull of a subset $B$ of $A$
such that $|B| = n+1$ (and this cannot be improved) so
we should expect that the number of operations is of order
$n$, and that this cannot really be improved.

Suppose now that, in some sense, $A$ is ``large'', and that,
rather than wanting to construct all the convex hull of $A$,
we only try to construct ``a proportion of it''. Do we really
need a number of operations that grows with $n$?
\end{quote}

To give a precise formulation of this question, we need a notion of \emph{large} and a notion of \emph{operation}. With a view to characterising the first, let $\gamma_n$ be the standard Gaussian measure on $\mathbb{R}^n$, so that the Gaussian measure of a subset $A$ of $\mathbb{R}^n$ is given by 
\begin{align*}
\gamma_n(A) := \int_A (2\pi)^{-n/2} e^{ -\norm{s}_2^2/2}\mathrm{d}s.
\end{align*}
Up to dilations, the standard Gaussian measure is the unique probability measure on $\mathbb{R}^n$ that is invariant under rotations and has the property that a random vector distributed according to this probability measure has independent coordinates. Thus the Gaussian measure $\gamma_n(A)$ of a set $A$ provides a natural notion of its size.

We will consider two operations on a subset $A$ of $\mathbb{R}^n$: $k$-fold Minkowski addition and $k$-fold convex combinations. These are given by
\begin{align} \label{eq:mink}
A^{\oplus k} &:= \{ s_1 + \cdots + s_k : s_i \in A \},\\
\mathrm{conv}_k(A) &:= \left\{ \lambda_1 s_1 + \cdots + \lambda_k s_k :
\lambda_i \in [0,1], \sum_{i=1}^k \lambda_i = 1,\, s_i \in A \right\}.
\end{align}
That is, $A^{\oplus k}$ is the set of sums of $k$ vectors in $A$, while
$\mathrm{conv}_k(A)$ is the set of convex combinations of at most $k$ vectors in $A$.

Observe that for any subset $A$ of $\mathbb{R}^n$, $\mathrm{conv}_1(A)=A$, $\mathrm{conv}_k(A) \subseteq \mathrm{conv}_{k+1}(A)$, and $A$ is convex if and only if $\mathrm{conv}_2(A)=  A$. Moreover, Carathéodory's theorem says that $\mathrm{conv}_{n+1}(A) = \mathrm{conv}(A)$. 

We say that a subset $A$ of $\mathbb{R}^n$ is balanced if $a \in A, \lambda \in [-1,1]$ implies $\lambda a \in A$. The $k$-fold Minkowski sum of a balanced set $A$ consists of all vectors of the form $\lambda_1 s_1 + \cdots + \lambda_k s_k$ with $s_i \in A$ and with $\lambda_i$ any real numbers satisfying $\sum_{i=1}^k |\lambda_i| \leq k$. In particular, if $A$ is balanced, then $A^{\oplus k}$ is also balanced, and $A^{\oplus k} \subseteq A^{\oplus (k+1)}$. Crucially, if $A$ is balanced, then $A^{\oplus k}$ is equal to the dilation $A^{\oplus k} = k\mathrm{conv}_k(A) = \{ k s : s \in \mathrm{conv}_k(A)\}$ of $\mathrm{conv}_k(A)$. In particular, when $A$ is balanced we have $\mathrm{conv}_k(A) \subsetneq A^{\oplus k}$, though of course the former set is significantly smaller than the latter. 
Thus for balanced sets, Minkowski summation may be viewed as a combination of two distinct effects: repeated convexification and a simultaneous dilation by a factor $k$.

With these notions at hand, in the 1990s Talagrand posed the following conjecture about large convex subsets of Minkowski sums:

\begin{conj}[Talagrand's convexity conjecture \cite{talagafa, talaPACM, talaproblems}] \label{conj:tala}
There exists $\varepsilon > 0$ and an integer $k \geq 2$ such that for every $n \geq 1$ and every balanced subset $A$ of $\mathbb{R}^n$ with $\gamma_n(A) \geq 1-\varepsilon$, the dilated Minkowski sum $\varepsilon^{-1} A^{\oplus k}$ contains a convex subset $K$ with $\gamma_n(K) \geq \varepsilon$. 
\end{conj}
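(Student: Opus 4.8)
The plan is to prove the conjecture by compressing the $n$-dimensional question into a dimension-free statement about empirical coordinate measures, and then constructing an explicit symmetric convex body inside $A^{\oplus k}$ by Minkowski-summing a bounded number of coordinate-rearrangements of typical vectors of $A$. First I would reduce. Since $A$ is balanced we have $0\in A$, $A=-A$, and (as recorded in the excerpt) $A^{\oplus k}=\{\lambda_1 s_1+\cdots+\lambda_k s_k: s_i\in A,\ \sum_{i=1}^k|\lambda_i|\le k\}$; in particular $A^{\oplus k}$ is symmetric, star-shaped, and contains every dilate $c\cdot\mathrm{conv}_j(A)$ for $1\le j\le k$, $0\le c\le k$. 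Hence it suffices to produce a \emph{symmetric convex body} $K$ with $\gamma_n(K)\ge\varepsilon$ and $K\subseteq A^{\oplus k}$, and we may freely replace $A$ by its radial closure (a closed star body of the same Gaussian measure). By Gaussian concentration we may further pass to the set $A'$ of $x\in A$ whose empirical coordinate measure $\mu_x:=\tfrac1n\sum_{i=1}^n\delta_{x_i}$ lies within $o(1)$ Wasserstein distance of $\gamma_1$; then $\gamma_n(A')\ge 1-2\varepsilon$, so $A$ still contains an overwhelming, coordinate-statistically homogeneous family of vectors.

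The dimension-free core is where the permuton/copula machinery enters. Fix a near-Gaussian profile $\phi:[0,1]\to\R$ (a quantile rearrangement of $\gamma_1$); for a permuton (equivalently copula) $\pi$ on $[0,1]^2$ let $\phi_\pi$ be the rearrangement of $\phi$ it induces. The crucial gain is that the space of permutons is \emph{compact} in the weak topology, a bound depending on $\varepsilon$ only and not on $n$. For a finite tuple $\pi_1,\dots,\pi_k$ the profile average $\tfrac1k(\phi_{\pi_1}+\cdots+\phi_{\pi_k})$ is governed by an explicit sum-copula; as $(\pi_1,\dots,\pi_k)$ ranges over a compact set of $k$-tuples the resulting profiles sweep out, after centring and symmetrising, a family whose pull-back to $\R^n$ — obtained by spreading $\phi$ over the $n$ coordinates and applying permutations of $[n]$ that discretise the $\pi_j$ — is a symmetric convex body $K_n$, its convexity inherited from the linearity (in the $\lambda_j$) of Minkowski averaging once the profiles are aligned and its symmetry from $A=-A$. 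Monge–Kantorovich duality is used to certify that the discretisation error between $\pi_j$ and the permutation of $[n]$ realising it is $o(\sqrt n)$ in Euclidean norm, uniformly in $n$, so $K_n$ is not eroded as $n\to\infty$.

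It then remains to check $K_n\subseteq A^{\oplus k}$ and $\gamma_n(K_n)\ge\varepsilon$. For the inclusion one must see that the spread rearrangements of $\phi$ actually lie in $A$, so that their $k$-fold sum lies in $A^{\oplus k}$. Here balancedness is exploited decisively: starting from a single $x\in A$ with $\mu_x\approx\gamma_1$, and using $A=-A$ together with an averaging argument over the symmetric group $S_n$ acting on coordinates, one shows that a positive proportion (bounded below in terms of $\varepsilon$ alone) of the permuted copies $\sigma(x)$ lie in $A$; the slack $\sum|\lambda_i|\le k$ rather than $=1$ is then used to absorb the permutations that miss $A$. For the measure bound one identifies $K_n$ concretely — for instance as a symmetric body squeezed between two Euclidean balls of radii comparable to $\sqrt n$, or between two constant-width slabs — and evaluates the Gaussian mass through the profile description; the value of $k=k_\varepsilon=O_\varepsilon(1)$ is dictated by how finely the compact family of permutons must be netted to make the swept-out body large. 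Small $n$ are handled separately, since there Carathéodory already gives $n+1$ operations and one may take $k_\varepsilon$ above that threshold.

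The main obstacle is the $S_n$-averaging step of the previous paragraph: proving that a typical vector of a \emph{balanced} large set has a positive, \emph{dimension-free} proportion of its coordinate-rearrangements again in the set. For the weaker operation $\mathrm{conv}_k$ and without balancedness this is false — this is precisely what the counterexample of this paper exhibits, a large set every vector of which leaves the set under all small families of rearrangements — so the argument must genuinely use both the symmetry $A=-A$, to trade one good rearrangement for a whole family, and the $\ell^1$ slack of Minkowski addition, to discard the bad rearrangements. Making that averaging over $S_n$ quantitative and uniform in $n$, rather than merely asymptotic, is the technical heart of the proof.
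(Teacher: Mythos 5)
You should first be aware that the statement you are attacking is not a theorem of this paper: it is Talagrand's conjecture, which the paper explicitly leaves open (the paper only refutes the \emph{stronger} variant with $\mathrm{conv}_k$ in place of $A^{\oplus k}$, and the author even remarks it is plausible that Conjecture \ref{conj:tala} is false). So your proposal cannot be checked against a proof in the paper; it has to stand on its own, and as written it does not. The two load-bearing steps are asserted rather than proved. First, the $S_n$-averaging step: balancedness means closure under multiplication by scalars in $[-1,1]$, which gives no leverage whatsoever over the action of coordinate permutations, and a set of Gaussian measure $1-\varepsilon$ need not contain a dimension-free positive proportion of the orbit $\{\sigma(x):\sigma\in S_n\}$ of a \emph{prescribed} typical $x\in A$; the only thing an averaging argument yields (as in Lemma \ref{lem:existence} of the paper) is the existence of \emph{some} orbit point $t$ with many rearrangements in the set, which is a much weaker statement and does not let you start from your chosen profile $\phi$. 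Second, and more seriously, the set you build --- scalar multiples of $k$-fold sums of rearrangements of a single vector --- is a union of segments through finitely many orbit points; its convexity is claimed to be ``inherited from linearity in the $\lambda_j$,'' but convex combinations with \emph{varying} permutations $\sigma_1,\dots,\sigma_k$ do not stay in this family, so $K_n$ is not convex, and no argument is given that its convex hull (or any convex subset of $A^{\oplus k}$) has Gaussian measure $\ge\varepsilon$. The appeal to compactness of permutons and to Monge--Kantorovich duality produces no quantitative, $n$-uniform estimate here; compactness of the limit space does not by itself bound $k_\varepsilon$.

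There is also a structural reason your soft argument cannot work without a genuinely quantitative new input: the paper's Theorems \ref{thm:upper} and \ref{thm:lower} show that for permutation-invariant, near-Gaussian sets, $\mathrm{conv}_k$ of the set has exceedance bounded away from $p_1$ by $c e^{-Ck^2}$, while every convex set of measure $\varepsilon$ must contain a vector of exceedance close to $p_1$. Hence any proof of Conjecture \ref{conj:tala} with bounded $k$ must exploit, in an essential and quantitative way, the dilation by $k$ in $A^{\oplus k}=k\,\mathrm{conv}_k(A)$ (equivalently, that the relevant exceedance threshold drops from $1$ to $1/k$); your proposal uses the $\ell^1$ slack only to ``absorb bad rearrangements,'' not to defeat this obstruction. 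Moreover Talagrand's own Gaussian-comparison result shows that $k=2$ fails even after arbitrary dilation, so the measure lower bound for any candidate $K$ cannot come from the kind of generic ``squeezed between balls of radius $\sim\sqrt n$'' description you invoke. In short: the reduction and the permuton language are consistent with the paper's framework, but the convexity of $K_n$, the lower bound $\gamma_n(K_n)\ge\varepsilon$, and the dimension-free orbit-proportion claim are all unproved, and at least the last two are exactly where the open difficulty lies.
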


In other words, Conjecture \ref{conj:tala} states that for a sufficiently large integer $k$, the $k$-fold Minkowski sum of any large balanced set necessarily contains a reasonably large convex subset, regardless of the underlying dimension. 

This conjecture is discussed at length in \cite{talagafa} and \cite{talaPACM}, and more recently in \cite{talaproblems}. In \cite{talagafa}, an intricate probabilistic argument involving Gaussian comparison techniques \cite{talaacta} is used to show that $k = 2$ is certainly not sufficient. Namely, it is shown there that for any $\varepsilon > 0$ it is possible to construct a subset $A$ of $\mathbb{R}^n$ of Gaussian measure $\gamma_n(A) \geq 1- \varepsilon$ such that $\varepsilon^{-1}(A+A) := \{ \varepsilon^{-1}(s_1+s_2) : s_i \in A\}$ does not contain a convex subset $K$ of Gaussian measure $\gamma_n(K) \geq \varepsilon$. Alongside Conjecture \ref{conj:tala}, Talagrand \cite{talaPACM} also proposed several related combinatorial conjectures motivated by these questions, which have received considerable attention and have largely been resolved \cite{FKNP,PP,PP2}.

Recalling that for balanced sets $A$, $\mathrm{conv}_k(A) \subsetneq A^{\oplus k}$ and $A^{\oplus k} = k \mathrm{conv}_k(A)$, we address a question stronger than Talagrand's original formulation:

\begin{question}
Does there exist an $\varepsilon > 0$ and an integer $k \geq 2$ such that for every $n \geq 1$ and every balanced subset $A$ of $\mathbb{R}^n$ with $\gamma_n(A) \geq 1-\varepsilon$, $\mathrm{conv}_k(A)$ contains a convex subset $K$ with $\gamma_n(K) \geq \varepsilon$? 
\end{question}

We use tools from optimal transport to prove a negative answer to this question, asserting that even when $k = O_\varepsilon(\sqrt{\log \log(n)})$, there are large subsets $A$ of $\mathbb{R}^n$ for which $\mathrm{conv}_k(A)$ contains no convex subset of reasonable size:

\begin{thm} \label{thm:main}
Let $n \geq 1$ and $\varepsilon \in (1/n,1/2]$. Then there exists a balanced subset $A_n$ of $\mathbb{R}^n$ with size $\gamma_n(A_n) \geq 1-C/n$, and such that for every integer $k$ satisfying
\begin{align*}
1 \leq k \leq c \sqrt{\log \log(n)- \log \log(1/\varepsilon)} - C,
\end{align*}
the set $\mathrm{conv}_k(A_n)$ contains no convex subset $K$ satisfying $\gamma_n(K) \geq \varepsilon$.
\end{thm}
The constants $c,C>0$ in Theorem \ref{thm:main} are universal.

\medskip
Since this article first appeared online, Talagrand's original question, Conjecture \ref{conj:tala}, has been resolved in the affirmative in works by Song \cite{song} and Hua, Song, and Tudose \cite{HST}. In \cite{song}, Conjecture \ref{conj:tala} was shown to be equivalent to a statement involving representing subgaussian random vectors as sums of dependent Gaussian vectors. In the subsequent work \cite{HST}, this representation conjecture was proved true, yielding the consequential geometric result:

\begin{thm}[Hua, Song and Tudose \cite{HST}] \label{thm:HST}
There exists $\varepsilon > 0$ such that for all $n \geq 1$ and every closed subset $A$ of $\mathbb{R}^n$ with $\gamma_n(A) \geq 1- \varepsilon$, the dilated threefold Minkowski sum $\varepsilon^{-1}(A+A+A)$ contains a convex subset $K$ of Gaussian measure $\gamma_n(K) \geq 1/2$. 
\end{thm}

Consequently, Talagrand's convexity conjecture holds, and in fact may be taken with $k=3$.

Our main result, Theorem \ref{thm:main}, acts as a complement to the breakthrough in Theorem \ref{thm:HST}:
the dilated threefold Minkowski sum $\varepsilon^{-1}(A+A+A)$ of any large set $A$ is guaranteed to contain a reasonable convex subset, but there is no universal constant $k$ for which the same can be said about $\mathrm{conv}_k(A)$. This establishes that bounded convexification behaves fundamentally differently from bounded Minkowski summation in high dimensions. In particular, the convexity creation phenomenon established by Hua, Song, and Tudose \cite{HST} cannot be explained solely in terms of taking bounded convex combinations; rather, the enlargement of sets due to dilation and Minkowski addition is an essential mechanism driving the creation of large convex subsets.

\subsection{Overview of our proof}

Our proof of Theorem \ref{thm:main}, like the recent works of Song \cite{song} and Hua, Song, and Tudose \cite{HST}, is based on translating geometric operations on high-dimensional vectors into probabilistic questions about sums of random variables.

From the perspective of Gaussian measure, the coordinates of a typical high-dimensional vector are approximately Gaussian. This allows geometric operations on vectors to be recast as probabilistic questions about Gaussian random variables. The essence of this viewpoint is already present in Talagrand's original work \cite{talagafa}: the coordinates of a sum or convex combination of vectors with approximately Gaussian coordinates behave like sums or convex combinations of dependent Gaussian random variables.

While this observation underlies both projects, the tools and conclusions are quite different. The works of Song \cite{song} and Hua, Song, and Tudose \cite{HST} show that by taking scaled sums $\varepsilon^{-1}(s_1+s_2+s_3)$ of vectors $s_1,s_2,s_3\in\mathbb{R}^n$ with approximately Gaussian coordinates, one can generate a sufficiently rich collection of coordinate distributions to guarantee the emergence of large convex subsets. Specifically, their approach translates questions about high-dimensional vectors into questions about the laws of subgaussian random vectors in fixed dimensions.

In contrast, we use optimal transport techniques to show that the coordinates of a convex combination $\lambda_1s_1+\cdots+\lambda_ks_k$ of vectors $s_1,\ldots,s_k\in\mathbb{R}^n$ with approximately Gaussian coordinates satisfy certain quantitative restrictions. These restrictions are modest at the level of one-dimensional distributions, but become decisive when combined with concentration of measure in high dimensions: any large convex set must contain vectors whose empirical coordinate measures are close to the relevant extremal threshold. Our approach reduces convex combinations of high-dimensional vectors to the laws of one-dimensional random variables, allowing tools from optimal transport to be brought to bear. 

\vspace{3mm}
To make these ideas more concrete, let $W_n := \{ t = (t^1,\ldots,t^n) \in \mathbb{R}^n : t^1 \leq \cdots \leq t^n \}$ be the set of vectors in $\mathbb{R}^n$ with nondecreasing coordinates. Let $\mathcal{W}_n$ be the set of probability measures on the real line that can be written as a sum of $n$ Dirac masses of size $1/n$. The map sending $t \in W_n$ to the probability measure $\frac{1}{n} \sum_{j=1}^n \delta_{t^j}$ in $\mathcal{W}_n$ is bijective. Let $\mathcal{S}_n$ denote the symmetric group of degree $n$. Consider the map 
\begin{align} \label{eq:correspondence}
\mathbb{R}^n \mapsto (\mathcal{W}_n,\mathcal{S}_n) \qquad s \mapsto (\mu_s,\sigma_s),
\end{align}
where for a vector $s = (s^1,\ldots,s^n)$ in $\mathbb{R}^n$, $\mu_s$ is the empirical measure of the coordinates of $s$ and $\sigma_s$ is the permutation that sorts the coordinates of $s$ in nondecreasing order. That is:
\begin{align*}
\mu_s := \frac{1}{n}\sum_{j=1}^n \delta_{s^j} \qquad \text{and} \qquad s^{\sigma_s(1)} \leq \cdots \leq s^{\sigma_s(n)}.
\end{align*}
If $s$ has two coordinates the same, i.e.\ if $s^{j_1} = s^{j_2}$ with $j_1 < j_2$ say, we break ties by preserving their original coordinate order. We call $\mu_s$ the empirical coordinate measure of $s$ and $\sigma_s$ the coordinate ordering permutation of $s$. 
The central idea of our approach hinges on an understanding of how taking convex combinations of vectors interacts with the correspondence in \eqref{eq:correspondence}.
\vspace{2mm}

When $n$ is large, the overwhelming majority of $\mathbb{R}^n$ is comprised of vectors $s$ for which $\mu_s$ is close to the standard one-dimensional Gaussian distribution $\gamma$. To formalise this idea, let $W(\mu_s,\gamma)$ denote the ($L^1$-)Wasserstein distance between $\mu_s$ and $\gamma$ (see Section \ref{sec:Wasser} for the definition), and define
\begin{align} \label{eq:Efirst}
E_n(\delta) := \{ s \in \mathbb{R}^n : W(\mu_s,\gamma) \leq \delta \}.
\end{align}
to be the set of vectors $s$ in $\mathbb{R}^n$ whose empirical coordinate measure $\mu_s$ lies within Wasserstein distance $\delta$ of the standard Gaussian law.

The following result, which we derive fairly quickly in Section \ref{sec:DKW} from the Dvoretzky-Kiefer-Wolfowitz inequality \cite{DKW} for Kolmogorov-Smirnov distances, states that in terms of Gaussian measure, most of high-dimensional space consists of vectors whose empirical coordinate measures are nearly Gaussian:
\begin{prop} \label{prop:DKW}
Let $\delta_n = C n^{-1/3}$. Then $A_n := E_n(\delta_n)$ has overwhelmingly large Gaussian measure in that $\gamma_n(A_n) \geq 1-C/n$. Here $C > 0$ is a universal constant. 
\end{prop}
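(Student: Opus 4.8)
The plan is to deduce the Wasserstein bound from a control on the Kolmogorov--Smirnov distance between the empirical distribution function of the coordinates and the Gaussian CDF. Write $F$ for the CDF of the standard one-dimensional Gaussian $\gamma$, and for $s = (s^1,\ldots,s^n) \in \mathbb{R}^n$ let $F_s$ denote the empirical CDF of its coordinates, i.e.\ $F_s(t) = \mu_s((-\infty,t]) = \frac1n \#\{i : s^i \leq t\}$. When the coordinates of $s$ are i.i.d.\ $\gamma$, the Dvoretzky--Kiefer--Wolfowitz inequality gives $\mathbb{P}(\sup_t |F_s(t) - F(t)| > \eta) \leq 2 e^{-2 n \eta^2}$. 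So the first step is to choose $\eta_n$ so that this probability is $\leq C/n$; taking $\eta_n$ a constant multiple of $\sqrt{(\log n)/n}$ suffices, and in particular $\eta_n \leq n^{-1/3}$ for $n$ large (and we can absorb small $n$ into the universal constant).

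The second step is the deterministic implication: if $\sup_t |F_s(t) - F(t)| \leq \eta$, then $W(\mu_s,\gamma) \leq \delta$ for a suitable $\delta = \delta(\eta)$. Here I would use the fact that in one dimension the Wasserstein-$1$ distance between two probability measures equals the $L^1$ norm of the difference of their CDFs, $W(\mu,\nu) = \int_{\mathbb{R}} |F_\mu(t) - F_\nu(t)|\,\mathrm{d}t$ (assuming the paper's $W$ is the order-$1$ Wasserstein distance; this will be recorded in Section~\ref{sec:OT}). A uniform bound $|F_s - F| \leq \eta$ does not by itself bound the $L^1$ norm, because the tails are long, but one splits the integral: on a window $[-T,T]$ one bounds $\int_{-T}^T |F_s - F| \leq 2T\eta$, and outside the window one bounds $\int_{|t|>T} |F_s(t)-F(t)|\,\mathrm{d}t$ by $\int_{|t|>T} F(t)\wedge(1-F(t))\,\mathrm{d}t$ plus the contribution of the at most a few coordinates of $s$ that can lie outside $[-T,T]$. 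Since $W(\mu_s,\gamma)$ depends on $s$, one must be slightly careful: on the event where the KS distance is small, \emph{all but at most $n\eta$ coordinates} still lie in a controlled range, but a handful of coordinates could a priori be enormous. This is the one genuinely delicate point.

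To handle the bad coordinates cleanly, I would intersect the DKW event with a second event of probability $\geq 1 - C/n$ on which $\max_i |s^i| \leq M_n$ for $M_n = O(\sqrt{\log n})$ — a union bound over the $n$ coordinates using Gaussian tail bounds does this. On the intersection of the two events, the empirical measure $\mu_s$ is supported in $[-M_n, M_n]$, so $\int_{|t| > M_n} |F_s(t) - F(t)|\,\mathrm{d}t = \int_{|t|>M_n} F(t)\wedge(1-F(t))\,\mathrm{d}t$, which is super-polynomially small, while $\int_{-M_n}^{M_n} |F_s - F|\,\mathrm{d}t \leq 2 M_n \eta_n = O(\sqrt{\log n}) \cdot O(\sqrt{(\log n)/n}) = O((\log n)/\sqrt{n})$. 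Thus $W(\mu_s,\gamma) = O((\log n)/\sqrt n) \leq n^{-1/3} = \delta_n$ for all sufficiently large $n$, and the set $A_n = E_n(\delta_n)$ contains the intersection of the two events and so has Gaussian measure $\geq 1 - C/n$ after adjusting $C$; finite $n$ are handled by enlarging $C$. The main obstacle, as noted, is controlling the tail contribution to the $L^1$/Wasserstein distance — a pure KS bound is not enough, and bringing in the auxiliary maximum-coordinate event is what makes the argument go through.

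Finally, $A_n = E_n(\delta_n)$ is balanced only if one additionally symmetrizes; but the statement of Proposition~\ref{prop:DKW} as written asks only for the measure bound on $E_n(\delta_n)$, so no further modification is needed here, and the balancing (if required downstream) is a separate cosmetic step. I would therefore present the proof as: (i) DKW gives the KS bound off an event of probability $\leq 2e^{-2n\eta_n^2} \leq C/n$; (ii) Gaussian tail + union bound gives $\max_i|s^i| \leq M_n$ off an event of probability $\leq C/n$; (iii) on the intersection, the one-dimensional Wasserstein identity plus the window split yields $W(\mu_s,\gamma) \leq \delta_n$; (iv) conclude.
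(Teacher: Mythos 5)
Your proof is essentially the same as the paper's: both invoke DKW to get a Kolmogorov–Smirnov bound of order $\sqrt{(\log n)/n}$ with failure probability $O(1/n)$, intersect with the event that all coordinates lie in $[-O(\sqrt{\log n}), O(\sqrt{\log n})]$ (union bound plus Gaussian tails), and then use the one-dimensional identity $W(\mu,\nu) = \int |F_\mu - F_\nu|$ split over a window and its complement to conclude $W(\mu_s,\gamma) = O((\log n)/\sqrt n) \leq n^{-1/3}$. No substantive difference.
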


The rate $n^{-1/3}$ in Proposition~\ref{prop:DKW} is chosen for simplicity and is not optimal, as any polynomial rate of decay would suffice for our subsequent arguments.

In our proof of Theorem \ref{thm:main} we take the balanced version $\tilde{A}_n = [-1,1]A_n := \{ \lambda s : s \in A_n, \lambda \in [-1,1]\}$ of the set $A_n$ occurring in the statement of Proposition \ref{prop:DKW}. 

Although $A_n$ is defined through a particular Wasserstein condition, it is not exceptional from the perspective of Gaussian measure. Indeed, any subset $B$ of $\mathbb{R}^n$ satisfying $\gamma_n(B) \geq 1-\varepsilon$ consists almost entirely of vectors in $A_n$, since
$\gamma_n(A_n \cap B) \geq 1-\varepsilon-Cn^{-1/3}.
$
Thus any set of large Gaussian measure must overlap $A_n$ in all but a negligible proportion of its mass.

\medskip
To formalise the probabilistic behaviour of convex combinations of almost-Gaussian vectors, we first characterize the probability distributions that can arise from arbitrary dependent sums of Gaussians. Let $\gamma = \gamma_1$ denote the standard Gaussian law, and for $k \geq 1$ define 
\begin{align} \label{eq:Mkdef}
\mathcal{M}_k := \{ \mu : \text{$\mu$ is the law of } \lambda_1Z_1+ \cdots +\lambda_kZ_k \text{ where $Z_i \sim \gamma$} \}
\end{align}
to be the set of probability laws $\mu$ on the real line that occur as the law of a convex combination of $k$ standard Gaussian random variables, coupled in any way possible. Note in particular that $\mathcal{M}_1$ consists only of the standard one-dimensional Gaussian law itself, i.e.\ $\mathcal{M}_1 = \{ \gamma \}$.

If we define $B(\mu, \delta):= \{ \nu \text{ prob.\ measure on $\mathbb{R}$}: W(\mu,\nu) \leq \delta \}$ to be the closed ball of radius $\delta$ around a probability measure $\mu$ on $\mathbb{R}$ in the Wasserstein metric, then the set $E_n(\delta)$ defined in \eqref{eq:Efirst} may alternatively be written
\begin{align} \label{eq:Edef}
E_n(\delta) := \{ s \in \mathbb{R}^n : \mu_s \in B(\gamma,\delta) \}.
\end{align}

In the sequel we establish the following result, which is a stability property for the representation \eqref{eq:Edef} following from natural properties enjoyed by Wasserstein distances under couplings:
\begin{prop} \label{prop:comma}
We have
\begin{align} \label{eq:Edef2}
\mathrm{conv}_k(E_n(\delta)) \subseteq \{ s \in \mathbb{R}^n : \mu_s \in B(\mathcal{M}_k,\delta) \},
\end{align}
where $B(\mathcal{M}_k,\delta ) := \bigcup_{ \mu \in \mathcal{M}_k } B(\mu,\delta)$. 
\end{prop}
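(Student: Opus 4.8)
The plan is to run the ``decomposition'' correspondence of the previous subsection at the level of couplings. Sampling a uniformly random coordinate turns a vector $s$ into a random variable whose law is exactly $\mu_s$; consequently a $k$-fold convex combination of vectors becomes a $k$-fold convex combination of \emph{jointly coupled} random variables. The constraint $s_i\in E_n(\delta)$, i.e.\ $W(\mu_{s_i},\gamma)\le\delta$, lets us couple each summand to a genuine standard Gaussian, and gluing these couplings together produces a convex combination of $k$ (dependent) standard Gaussians lying within Wasserstein distance $\delta$ of $\mu_s$ --- which is precisely what membership of $\mu_s$ in $B(\mathcal{M}_k,\delta)$ means.

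In detail, fix $s\in\mathrm{conv}_k(E_n(\delta))$ and write $s=\sum_{i=1}^k\lambda_i s_i$ with $\lambda_i\ge 0$, $\sum_{i=1}^k\lambda_i=1$, and $s_i\in E_n(\delta)$ for every $i$. Let $U$ be uniform on $\{1,\dots,n\}$ and let $X_i$ denote the $U$-th coordinate of $s_i$. Then $X_i$ has law $\mu_{s_i}$, the variables $X_1,\dots,X_k$ are defined on one common probability space, and $X:=\sum_{i=1}^k\lambda_i X_i$, being the $U$-th coordinate of $s$, has law $\mu_s$. For each $i$, choose a coupling $\pi_i$ of $\mu_{s_i}$ and $\gamma$ attaining $W(\mu_{s_i},\gamma)$ (such optimal couplings exist here, the measures having finite moments on $\mathbb{R}$; alternatively work up to an arbitrarily small error and pass to the limit at the end), and let $\kappa_i(\mathrm{d}z\mid x)$ be a regular conditional distribution for the $\gamma$-coordinate given the value $x$ of the $\mu_{s_i}$-coordinate under $\pi_i$ --- this exists since $\mathbb{R}$ is Polish.

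The crux is a gluing step. Enlarging the probability space if necessary (say by adjoining independent auxiliary randomness), introduce $Z_1,\dots,Z_k$ such that, conditionally on $(X_1,\dots,X_k)$, they are independent with $Z_i$ distributed as $\kappa_i(\cdot\mid X_i)$. Because the conditional law of $Z_i$ depends only on $X_i$, the pair $(X_i,Z_i)$ has law $\pi_i$; in particular each $Z_i\sim\gamma$, so $Z:=\sum_{i=1}^k\lambda_i Z_i$ has a law $\mu$ belonging to $\mathcal{M}_k$ by \eqref{eq:Mkdef}. Since $(X,Z)$ is a coupling of $\mu_s$ and $\mu$, using that $W(\alpha,\beta)\le\E|Y-Y'|$ for any coupling $(Y,Y')$ of $\alpha,\beta$, together with the triangle inequality and the choice of the $\pi_i$, we get
\[
W(\mu_s,\mu)\ \le\ \E\left| \sum_{i=1}^k\lambda_i (X_i-Z_i)\right|\ \le\ \sum_{i=1}^k\lambda_i\,\E\left|X_i-Z_i\right|\ \le\ \sum_{i=1}^k\lambda_i\, W(\mu_{s_i},\gamma)\ \le\ \delta ,
\]
and if $W$ denotes the Wasserstein-$p$ distance for some $p>1$, one replaces $\E|\,\cdot\,|$ by $(\E|\,\cdot\,|^p)^{1/p}$ and invokes Minkowski's inequality in the second step. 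Hence $\mu_s\in B(\mu,\delta)\subseteq B(\mathcal{M}_k,\delta)$, and since $s\in\mathrm{conv}_k(E_n(\delta))$ was arbitrary this is \eqref{eq:Edef2}.

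The one genuine obstacle is the gluing step: the $Z_i$ must be realised on the same space as the already-coupled $X_i$ while each pair $(X_i,Z_i)$ retains its prescribed joint law $\pi_i$. This is exactly the content of the standard gluing/disintegration lemma for probability measures on Polish spaces --- condition on $(X_1,\dots,X_k)$ and sample the $Z_i$ conditionally independently from the respective kernels $\kappa_i$. Everything else is bookkeeping with the coordinate-sampling correspondence and the elementary inequality $\big|\sum_i\lambda_i y_i\big|\le\sum_i\lambda_i|y_i|$.
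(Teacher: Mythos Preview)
Your proof is correct. The route differs from the paper's in how the Gaussian comparison variables $Z_i$ are constructed. The paper first writes $\mu_s$ via the explicit permutation copula $C^{\bm\sigma}$ (its Lemma on empirical measures under vector addition), so that $X_i=Q_i(U_i)$ with $(U_1,\dots,U_k)\sim C^{\bm\sigma}$, and then simply sets $Z_i:=\Phi^{-1}(U_i)$ using the \emph{same} copula; the quantile formula $W(\mu_i,\gamma)=\int_0^1|Q_i(r)-\Phi^{-1}(r)|\,\mathrm{d}r$ immediately gives $\mathbb{E}|X_i-Z_i|=W(\mu_{s_i},\gamma)$ with no gluing needed. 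You instead take abstract optimal couplings $\pi_i$ and invoke the disintegration/gluing lemma to realise the $Z_i$ jointly with the already-coupled $X_i$. Your argument is more portable (it would work verbatim with any reference measure in place of $\gamma$ and in any Polish space, and your Minkowski remark extends it to $W_p$), while the paper's quantile substitution is slicker in one dimension and dovetails with the copula machinery it develops for later sections. Either way the computation collapses to the same inequality $W(\mu_s,\mu)\le\sum_i\lambda_i\,\mathbb{E}|X_i-Z_i|\le\sum_i\lambda_i W(\mu_{s_i},\gamma)\le\delta$.
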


In other words, if $E_n(\delta)$ consists of vectors whose empirical coordinates lie within $\delta$ of standard Gaussian, then $\mathrm{conv}_k(E_n(\delta))$ consists only of vectors whose empirical coordinate measures lie within $\delta$ of the law of a $k$-fold convex combination of standard Gaussians.

Consider now in particular taking convex combinations of vectors in the set $A_n := E_n(\delta_n)$ defined in the statement of Proposition \ref{prop:DKW}. Proposition \ref{prop:comma} states that the empirical coordinate measure of an element $s \in \mathrm{conv}_k(A_n)$ must be within Wasserstein distance $\delta_n$ of some $\mu \in \mathcal{M}_k$. When $n$ is large compared to $k$, this condition places a significant restriction on $\mathrm{conv}_k(A_n)$. 

To characterise this restriction, we introduce a simple functional on probability measures which supplies a means of distinguishing between measures that are close to some element of $\mathcal{M}_k$ and those that are not. Namely, we simply look at the mass the measure gives to $[1,\infty)$:
\begin{align*}
\mathrm{Exc}_1(\mu) := \mu([1,\infty)).
\end{align*}
We call $\mathrm{Exc}_1(\mu)$ the exceedance of the measure $\mu$.  We will also refer to the exceedance of a vector $s \in \mathbb{R}^n$ to be the exceedance $\mathrm{Exc}_1(\mu_s)$ of its empirical coordinate measure, which simply captures the fraction of coordinates exceeding $1$:
\begin{align*}
\mathrm{Exc}_1(\mu_s) = \frac{1}{n} \# \{ 1 \leq i \leq n : s_i \geq 1 \}.
\end{align*}

We are led to consider the following problem in optimal transport:

\begin{problem} \label{prob:Sk}
Calculate the supremum $S_k$ of $\mathrm{Exc}_1(\mu)$ over all measures $\mu$ in $\mathcal{M}_k$. That is, calculate
\begin{align*}
S_k := \sup_{ \Pi } \sup_{ \lambda_1,\ldots,\lambda_k } \mathbf{P} \left( \lambda_1 Z_1 + \ldots + \lambda_k Z_k \geq 1 \right),
\end{align*}
where the supremum is taken over all couplings $\Pi$ of random variables $(Z_1,\ldots,Z_k)$ with standard Gaussian marginals, and over all $\lambda_1,\ldots,\lambda_k \in [0,1]$ satisfying $\sum_{i=1}^k \lambda_i = 1$.
\end{problem}

We are particularly interested in the asymptotic behaviour of $S_k$ as $k \to \infty$. As a starting point, it is possible to show using a first moment argument that for every $k \geq 1$ we have the following absolute bound
\begin{align} \label{eq:universal}
S_k \leq p_1
\end{align}
where $p_1$ is the unique solution $p \in (0,1)$ to the equation $\mathbf{E}[ Z | Z > \Phi^{-1}(1-p) ] = 1$; see Lemma \ref{lem:firstmoment}. 

We are able to refine the bound in \eqref{eq:universal} using a powerful idea in 
optimal transport called Monge-Kantorovich duality. Given a measurable functional 
$c:\mathbb{R}^k \to [0,\infty)$ and a $k$-tuple of probability measures $\mu_1,\ldots,\mu_k$, the central problem in optimal transport is to calculate or estimate the supremum $\sup_\Pi \mathbf{E}_\Pi [ c(Z_1,\ldots,Z_k)]$, where the supremum runs over all couplings $\Pi$ of the probability measures $\mu_1,\ldots,\mu_k$. The celebrated Monge-Kantorovich duality states that under mild conditions we have the relation
\begin{align} \label{eq:MKduality00}
\sup_\Pi \mathbf{E}_\Pi \left[ c(Z_1,\ldots,Z_k) \right] = \inf_{f_1,\ldots,f_k} \sum_{i=1}^k \mathbf{E}_{\mu_i} [ f_i(Z_i) ]
\end{align}
where the infimum is taken over all $f_1,\ldots,f_k$ satisfying $\sum_{i=1}^k f_i(z_i) \geq c(z_1,\ldots,z_k)$. See e.g., Villani \cite{villani}.

By letting $c(z_1,\ldots,z_k) = \mathrm{1} \{ \lambda_1 Z_1 + \cdots + \lambda_k Z_k \geq 1 \}$ and then subsequently optimising over the functions $f_1,\ldots,f_k$, we are able to refine the universal bound in \eqref{eq:universal} to obtain the following $k$-dependent bound:

\begin{thm} \label{thm:MK}
There are universal constants $c,C>0$ such that 
\begin{align} \label{eq:MK0}
S_k \leq p_1 - c e^{ - Ck^2}.
\end{align}
\end{thm}

We note in particular, for each fixed $k$, there does not exist a measure $\mu$ in $\mathcal{M}_k$ such that $\mathrm{Exc}_1(\mu) = p_1$.

By exploiting continuity properties of the exceedance functional in Wasserstein distances in conjunction with Proposition \ref{prop:comma}, we are able to obtain as a corollary the following result which is an upper bound on the possible exceedance of an element of $\mathbb{R}^n$ that can be written as a convex combination of $k$ elements of $E_n(\delta)$. 
\begin{thm} \label{thm:upper}
Let $s \in \mathrm{conv}_k(E_n(\delta))$. Then for $0 \leq \delta \leq 1/2$ and $k \geq 1$ we have 
\begin{align*}
\mathrm{Exc}_1(\mu_s) \leq p_1 - c e^{ - Ck^2} + C \sqrt{\delta}.
\end{align*}
\end{thm}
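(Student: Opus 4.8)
The plan is to combine the three preceding results: Proposition \ref{prop:comma} passes from a vector $s\in\mathrm{conv}_k(E_n(\delta))$ to a measure $\mu\in\mathcal{M}_k$ that is $W$-close to $\mu_s$; Theorem \ref{thm:MK} controls $\mathrm{Exc}_1(\mu)$ for that $\mu$; and what remains is a quantitative continuity estimate showing that the exceedance functional cannot jump by more than $C\sqrt{\delta}$ under a $W$-perturbation of size $\delta$ of an element of $\mathcal{M}_k$. Concretely, given $s\in\mathrm{conv}_k(E_n(\delta))$, Proposition \ref{prop:comma} furnishes $\mu\in\mathcal{M}_k$ with $W(\mu_s,\mu)\le\delta$, and Theorem \ref{thm:MK} gives $\mathrm{Exc}_1(\mu)\le p_1-ce^{-Ck^2}$; so it suffices to prove the stability estimate $\mathrm{Exc}_1(\mu_s)\le \mathrm{Exc}_1(\mu)+C\sqrt{\delta}$.

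For the stability estimate I would use a softened indicator. For $t\in(0,\tfrac12]$ let $g_t\colon\mathbb{R}\to[0,1]$ be the piecewise-linear, $\tfrac1t$-Lipschitz function equal to $0$ on $(-\infty,1-t]$ and to $1$ on $[1,\infty)$, so that $\mathbf{1}_{[1,\infty)}\le g_t\le\mathbf{1}_{[1-t,\infty)}$ pointwise. Applying Kantorovich--Rubinstein duality to the $1$-Lipschitz function $t\,g_t$ (it is immaterial which Wasserstein exponent $W$ denotes, since $W_1\le W_p$) gives $\bigl|\int g_t\,\mathrm{d}\mu_s-\int g_t\,\mathrm{d}\mu\bigr|\le \tfrac1t W(\mu_s,\mu)$, whence
\begin{align*}
\mathrm{Exc}_1(\mu_s)=\int \mathbf{1}_{[1,\infty)}\,\mathrm{d}\mu_s&\le \int g_t\,\mathrm{d}\mu_s\le \int g_t\,\mathrm{d}\mu+\frac{\delta}{t}\\
&\le \mu([1-t,\infty))+\frac{\delta}{t}=\mathrm{Exc}_1(\mu)+\mu([1-t,1))+\frac{\delta}{t}.
\end{align*}
Thus the whole statement reduces to the regularity fact that there is a universal $M$ with $\mu([1-t,1))\le M t$ for all $t\in(0,\tfrac12]$, all $k\ge1$ and all $\mu\in\mathcal{M}_k$; granting this and optimizing over $t$ (take $t=\sqrt{\delta/M}$) yields $\mathrm{Exc}_1(\mu_s)\le \mathrm{Exc}_1(\mu)+2\sqrt{M\delta}$, which together with Theorem \ref{thm:MK} is the claim. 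Note the exponent $\tfrac12$ in $\sqrt{\delta}$ is precisely the exponent forced by trading the Lipschitz error $\delta/t$ against a \emph{linear} modulus $Mt$.

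The main obstacle is this uniform non-concentration estimate: a convex combination $S=\sum_i\lambda_iZ_i$ of arbitrarily coupled standard Gaussians must not place mass more than of order $t$ in the window $[1-t,1)$ just below $1$. I would establish it by a first-moment argument in the spirit of Lemma \ref{lem:firstmoment}: if $\mu([1-t,1))=q$ with $q$ large compared to $t$, then the mass $\mu([1-t,\infty))\ge q$ sits at height at least $1-t$, and combined with whatever tail $\mu$ carries above $1$ this would make $\mathbb{E}[S\,\mathbf{1}_{S\ge 1-t}]$ exceed what the mean-zero constraint $\mathbb{E}[S]=0$ and the Gaussian first-moment budget allow — forcing $q=O(t)$ with a constant independent of $k$ and of the coupling. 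An equivalent, perhaps cleaner, route is to observe that the Monge--Kantorovich argument behind Theorem \ref{thm:MK} is not special to the threshold $1$: it yields $\mu([a,\infty))\le p_a-ce^{-Ck^2}$ uniformly for $a$ in a fixed neighbourhood of $1$, where $a\mapsto p_a$ (defined by $\mathbb{E}[Z\mid Z>\Phi^{-1}(1-p_a)]=a$) is smooth; one then replaces $\mathrm{Exc}_1(\mu)+\mu([1-t,1))$ in the display above by $\mu([1-t,\infty))\le p_{1-t}-ce^{-Ck^2}\le p_1+C_1t-ce^{-Ck^2}$ and again optimizes in $t$. In either form, once the bound of Theorem \ref{thm:MK} is shown to be robust under perturbing the threshold by $t$ (equivalently, the measure by $\delta$ in $W$), only the elementary truncation-and-optimization computation above remains.
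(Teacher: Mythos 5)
Your overall architecture is correct and, in its second formulation (``route (b)''), coincides with the paper's proof: Proposition \ref{prop:comma} gives $\mu\in\mathcal{M}_k$ with $W(\mu_s,\mu)\le\delta$; the Lipschitz-mollification plus Kantorovich--Rubinstein argument gives $\mathrm{Exc}_1(\mu_s)\le \mu([1-t,\infty))+\delta/t$, which is exactly the content of the paper's Lemma \ref{lem:wasser} (proved there by integrating the difference of distribution functions over $[1-\sqrt{\delta},1]$, with the fixed choice $t=\sqrt{\delta}$); and one then invokes the $w$-uniform version of the exceedance bound, Theorem \ref{thm:MK2}, to get $\mu([1-t,\infty))\le p_{1-t}-ce^{-Ck^2}$, finishing with the smoothness of $w\mapsto p_w$. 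Your variant keeps $t$ free and optimizes, which gives the same $\sqrt{\delta}$ exponent.

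However, your preferred ``route (a)'' has a genuine gap. The uniform non-concentration estimate you posit --- that $\mu([1-t,1))\le Mt$ for a universal $M$, for all $k$ and all $\mu\in\mathcal{M}_k$ --- is \emph{false}. To see this, fix a small $\eta>0$ and choose $\eta'>0$ so that $G(p_1+\eta')=1-\eta$ (possible since $G$ is continuous and decreasing with $G(p_1)=1$). Take the box-diagonal coupling of $Z_1,\ldots,Z_k$ at the threshold $\Phi^{-1}(1-p_1-\eta')$, with $\lambda_i=1/k$: conditional on the ``all high'' event of probability $p_1+\eta'$, the average $k^{-1}\sum Z_i$ concentrates at $1-\eta$ as $k\to\infty$. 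Hence for any $t\in(2\eta,1/2]$ and $k$ large enough, $\mu([1-t,1))\ge p_1/2$, while $t$ can be taken as small as $2\eta$; so $\mu([1-t,1))/t$ is unbounded. The first-moment argument you sketch cannot rescue this: the mean-zero constraint bounds $\mu([1-t,\infty))\le p_{1-t}$ (this is Lemma \ref{lem:firstmoment} at $w=1-t$), but it places no linear-in-$t$ bound on $\mu([1-t,1))$ alone, because $\mathrm{Exc}_1(\mu)$ may be zero. The fix is precisely not to split $\mu([1-t,\infty))$ into $\mathrm{Exc}_1(\mu)+\mu([1-t,1))$ and bound the pieces separately, but to bound the whole thing via $p_{1-t}-ce^{-Ck^2}\le p_1+C_1t-ce^{-Ck^2}$ --- i.e.\ your route (b), which is the paper's argument.
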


Thus provided $\delta$ is small compared to $ce^{ - Ck^2}$, the exceedances of vectors in $\mathrm{conv}_k(E_n(\delta))$ are bounded away from the theoretical threshold $p_1$. 

We will apply Theorem \ref{thm:upper} with $\delta_n = C n^{-1/3}$, and conclude by Proposition \ref{prop:DKW} that $A_n = E_n(\delta_n)$ is a large subset of $\mathbb{R}^n$ with the property that the exceedances of vectors in $\mathrm{conv}_k(A_n)$ are bounded away from $p_1$ by the upper bound $\mathrm{Exc}_1(\mu_s) \leq p_1 - ce^{ - Ck^2} + C n^{-1/6}$. When $n$ is large compared to $k$, this bound places a strong constraint on the elements of $\mathrm{conv}_k(A_n)$; we will see below that any reasonably large convex set $K$ necessarily contains a vector not satisfying this constraint.

\vspace{3mm}
Theorem \ref{thm:upper} is one half of the proof of our main result, Theorem \ref{thm:main}. The other (more difficult) half of the proof lies in showing that 
\emph{any} sufficiently large convex set $K$ in high-dimensional space necessarily contains vectors whose empirical coordinate measures have exceedances arbitrarily close to $p_1$. Our first step in this direction is the following reverse inequality to Theorem \ref{thm:MK}:
\begin{align} \label{eq:lowerness}
S_d \geq p_1 - C \frac{\log d}{\sqrt{d}},
\end{align}
where we introduce a new integer $d \geq 1$ that will play a different role to the integer $k$ occurring in the statement of Theorem \ref{thm:main}. 

We prove \eqref{eq:lowerness} by considering a simple coupling that we call the box-product coupling at $q$. 
This is the coupling $\Pi$ of $d$ standard Gaussian random variables so that they all exceed the value $q$ at precisely the same time, but are otherwise independent, so that the probability density function on $\mathbb{R}^d$ of this coupling is given by
\begin{align*}
\Pi(\mathrm{d}x) := \left( (1-p)^{-(d-1)} \mathrm{1}_{\{ x_i < q ~\forall i=1,\ldots,d \}} + p^{-(d-1)} \mathrm{1}_{\{ x_i \geq q ~\forall i=1,\ldots,d \}}   \right) \gamma_d(\mathrm{d}x),
\end{align*}
where $1-p := \Phi(q)$ and $\gamma_d(\mathrm{d}x)$ is the standard $d$-dimensional Gaussian density. 
For a carefully chosen value of $q$ of the form $\Phi^{-1}(1 - p_1 ) + O(\sqrt{\log(d)/d})$, it transpires that the law $\mu_d \in \mathcal{M}_d$ of $d^{-1}(Z_1+\ldots+Z_d)$ has an exceedance of at least $p_1 - C\sqrt{\log d/d}$, thereby establishing \eqref{eq:lowerness}.
See Corollary \ref{cor:underbound} for further details. 

\vspace{3mm}
The most intricate part of the proof of our main result is showing that when $n$ is large, any convex set $K$ of Gaussian measure $\gamma_n(K)\geq \varepsilon$ necessarily contains $d$ distinct vectors $s_1,\ldots,s_d$ such that the $d$-dimensional empirical coordinate measure
\begin{align*}
\mu_{s_1,\ldots,s_d} := \frac{1}{n} \sum_{j=1}^n \delta_{(s_1^j,\ldots,s_d^j)}
\end{align*}
of the $d$-tuple $(s_1,\ldots,s_d)$ (which is a measure on $\mathbb{R}^d$) is close to a box-product coupling of Gaussians. To give a brief idea of how we establish that such a $d$-tuple exists here, given a vector $t=(t^1,\ldots,t^n) \in \mathbb{R}^n$ and a permutation $\sigma \in \mathcal{S}_n$, let $\sigma t = (t^{\sigma(1)},\ldots,t^{\sigma(n)})$. 
Suppose that $K$ (convex or otherwise) has Gaussian measure $\gamma_n(K) \geq \varepsilon$. Then by Proposition \ref{prop:DKW}, $\gamma_n(K \cap A_n) \geq \varepsilon/2$ (provided $C/n \leq \varepsilon/2$). Now for $t \in \mathbb{R}^n$, let
\begin{align*}
N(t) := \# \{ \sigma \in \mathcal{S}_n : \sigma t \in K \cap A_n \}
\end{align*}
be the number of ways of reordering the coordinates of $t$ to obtain a vector in $K \cap A_n$. By symmetry we have
\begin{align*}
\int_{\mathbb{R}^n} N(t) \gamma_n(\mathrm{d}t) = n! \gamma_n( K \cap A_n) \geq (\varepsilon/2)n!.
\end{align*}
In particular, there exists some $t \in K \cap A_n$ such that $N(t) \geq (\varepsilon/2)n!$. 

We then establish using a probabilistic argument that since the collection of ways of reordering the coordinates of $t$ to obtain an element of $K \cap A_n$ is sufficiently rich, when $n$ is sufficiently large compared to $d$ there must be a $d$-tuple of permutations $(\sigma_1,\ldots,\sigma_d)$ such that each $\sigma_it$ lies in $K$, and moreover such that $\mu_{\sigma_1 t,\ldots,\sigma_d t}$ is close to a box-product coupling. This argument appeals to concentration properties of random permutons \cite{AJ, HKMRS, KKRW}. 

If $K$ is convex, then the convex combination $u := d^{-1}( \sigma_1 t + \ldots + \sigma_d t)$ of vectors also lies in $K$ and has coordinate empirical distribution close to $\mu_d$, which has a high exceedance. Putting these ideas together, we are able to prove the following result:

\begin{thm} \label{thm:lower} 
Let $\varepsilon \in (0,1/2]$ and $n \in \mathbb{N}$. Let $K$ be a convex subset of $\mathbb{R}^n$ with $\gamma_n(K) \geq \varepsilon$. Then $K$ contains a vector $u$ whose exceedance satisfies
\begin{align*}
\mathrm{Exc}_1(\mu_u) \geq p_1 - C_\varepsilon \log(n)^{-1/3},
\end{align*}
where $C_\varepsilon = C \log(1/\varepsilon)^{1/3}$ for some universal $C > 0$. 

\end{thm}

We now spell out how Theorem \ref{thm:main} follows from Proposition \ref{prop:DKW}, Theorem \ref{thm:upper}, and Theorem \ref{thm:lower}:

\begin{proof}[Proof of Theorem \ref{thm:main} assuming Proposition \ref{prop:DKW}, Theorem \ref{thm:upper} and Theorem \ref{thm:lower}]
By Proposition \ref{prop:DKW}, if $\delta_n = C n^{-1/3}$, the set $A_n := E_n(\delta_n) \subseteq \mathbb{R}^n$ has Gaussian measure at least $1 - C/n$. Now let 
\begin{align*}
\tilde{A}_n := [-1,1]A_n := \{ \lambda s : s \in A_n, \lambda \in [-1,1] \}.
\end{align*}
Then $\tilde{A}_n$ is balanced and has Gaussian measure at least as large as that of $A_n$. Note further that 
\begin{align*}
\mathrm{conv}_k(\tilde{A}_n) = \{ \lambda s : s \in \mathrm{conv}_k(A_n), \lambda \in [-1,1] \},
\end{align*}
which implies that
\begin{align*}
\sup_{s \in \mathrm{conv}_k(\tilde{A}_n)} \mathrm{Exc}_1(\mu_s) = \sup_{s \in \mathrm{conv}_k(A_n)} \mathrm{Exc}_1(\mu_s).
\end{align*}
Now on the one hand, by setting $\delta_n = C n^{-1/3}$ in Theorem \ref{thm:upper}, we see that the exceedance $\mathrm{Exc}(\mu_s)$ associated with any vector $s$ in $\mathrm{conv}_k(A_n)$ must satisfy
\begin{align*}
\mathrm{Exc}_1(\mu_s) \leq p_1 - c e^{ - Ck^2} + C n^{-1/6}.
\end{align*}
Conversely, if $K$ is a convex subset of Gaussian measure at least $\varepsilon$, then by Theorem \ref{thm:lower}, $K$ contains a vector $u$ whose exceedance satisfies
\begin{align*}
\mathrm{Exc}_1(\mu_u) \geq p_1 - C_\varepsilon \log(n)^{-1/3}.
\end{align*}
where $C_\varepsilon = C \log(1/\varepsilon)^{1/3}$. 
It follows that if 
\begin{align} \label{eq:kk}
C_\varepsilon \log(n)^{-1/3} < c e^{ - Ck^2} - C n^{-1/6},
\end{align}
then $K$ cannot be contained in $\mathrm{conv}_k(A_n)$. 

Unraveling the inequality \eqref{eq:kk} to make $k$ the subject, we see that this is equivalent to
\begin{align*}
k \leq c \sqrt{ \log \log(n) - \log \log(1/\varepsilon)} - C,
\end{align*}
thereby completing the proof of Theorem \ref{thm:main}.

\end{proof}

We conclude our proof overview by noting a potential avenue for improving the rate in Theorem \ref{thm:main}. There is a large gap in the lower and upper bounds we obtain in the inequality
\begin{align} \label{eq:Skb}
p_1 - C e^{ - c k^2 } \geq S_k \geq p_1 - C \frac{ \log k}{ \sqrt{k}},
\end{align}
which follows from Theorem \ref{thm:MK} and \eqref{eq:lowerness}. If we were able to sharpen these bounds, it is likely one could improve on the $O(\sqrt{\log \log(n)})$ rate in the statement of Theorem \ref{thm:main}. 

The author conjectures that the upper bound in \eqref{eq:Skb} is closer to the true value of $S_k$ than the lower bound, and believes in particular that there is likely scope to improve on the lower bound by using a more sophisticated coupling. 
We should say however that the box-product coupling was chosen for its relatively simple structure, which facilitates the (already quite delicate) probabilistic proof of Theorem \ref{thm:lower}. Although a more sophisticated coupling might yield a sharper bound, it would likely come at the cost of significantly increased complexity in the argument. 

\subsection{Further discussion and related work}

As discussed in the introduction, the recent resolution of Talagrand's conjecture by Hua, Song, and Tudose \cite{HST} relies on the representation of subgaussian random vectors \cite{song}. In a related development, van Handel has recently proved a strengthening of Talagrand's subgaussian comparison theorem \cite{vanhandel}.

To frame the geometric implications of our result, recall that the Carathéodory number of a subset $A$ of $\mathbb{R}^n$ is the smallest integer $k$ such that $\mathrm{conv}_k(A)=\mathrm{conv}(A)$. Upper bounds on Carathéodory numbers for certain classes of sets have been established in \cite{8,9}. If we define the \emph{effective Carathéodory number} of $A$ to be the smallest integer $k$ such that $\mathrm{conv}_k(A)$ contains a convex subset of Gaussian measure at least $\frac{1}{10}\gamma_n(A)$, then Theorem \ref{thm:main} implies that there exist arbitrarily large subsets $A$ of $\mathbb{R}^n$ whose effective Carathéodory numbers are at least $O(\sqrt{\log\log n})$.

On a related note, an argument due to Maurey (see also \cite{pisier}) shows that every point in the convex hull $\mathrm{conv}(A)$ of a subset $A$ of the Euclidean unit ball $B(0,1)\subset\mathbb{R}^n$ lies within distance $1/\sqrt{k}$ of a $k$-fold convex combination of points in $A$, irrespective of the ambient dimension; see \cite[Theorem 0.0.2]{vershynin}.

Our proof of Theorem \ref{thm:main} uses the idea of associating $d$-tuples of permutations in the symmetric group $\mathcal{S}_n$ with copulas on $[0,1]^d$. Namely, each $d$-tuple of permutations $(\sigma_1,\ldots,\sigma_d)$ in $\mathcal{S}_n$ may be associated with a probability measure on $[0,1]^d$ supported on hypercubes of side length $1/n$; see \eqref{eq:cd}. This idea first appeared for $d=2$ in \cite{GGK,HKMRS}, and for general $d$ in \cite{CHS}. A large deviation principle for the limiting copula was proved for $d=2$ in \cite{KKRW}. The author and Octavio Arizmendi \cite{AJ} use a higher-dimensional analogue of this large deviation principle to develop optimal transport formulations of operations in free probability. A key ingredient in the proof of Theorem \ref{thm:lower} is the concentration of the probability measure associated with a $d$-tuple of independent uniform permutations around Lebesgue measure on $[0,1]^d$.

The broader idea of associating high-dimensional vectors with probability measures through their empirical coordinate distributions and studying continuity properties in Wasserstein distance also appears in recent work of the author and McSwiggen \cite{JM} on an asymptotic version of Horn's problem.

\subsection{Overview}
As outlined in the introduction, the proof of our main result, Theorem \ref{thm:main}, follows from Proposition \ref{prop:DKW}, Theorem \ref{thm:upper} and Theorem \ref{thm:lower}. As such, the remainder of the article is structured as follows:

\begin{itemize}
\item In Section \ref{sec:OT} we discuss couplings and some basic notions from optimal transport, and prove Proposition \ref{prop:comma}.
\item In Section \ref{sec:DKW} we prove Proposition \ref{prop:DKW}.
\item In Section \ref{sec:upper} we study couplings of Gaussian random variables that maximise probabilities of the form $\mathbf{P}(\lambda_1Z_1+ \cdots +\lambda_kZ_k \geq w)$, proving the upper bound in Theorem \ref{thm:upper} and the lower bound in \eqref{eq:lowerness}.
\item In the final Section \ref{sec:lower} we complete our argument with a proof of Theorem \ref{thm:lower}.  
\end{itemize}
Throughout the article, $c,C > 0$ denote universal constants that may change from line to line. A lower case $c$ refers to a universal constant that is sufficiently small, and an upper case $C$ refers to a universal constant that is sufficiently large.

\section{Optimal transport ideas} \label{sec:OT}

\subsection{Couplings and copulas}
 Let $\mu_1,\ldots,\mu_k$ be probability measures on $\mathbb{R}$. A coupling $\Pi$ of $\mu_1,\ldots,\mu_k$ is a probability measure on $\mathbb{R}^k$ whose marginals are given by $\mu_1,\ldots,\mu_k$. In other words, $\Pi$ is a coupling of $\mu_1,\ldots,\mu_k$ if
\begin{align*}
\Pi( \{ x \in \mathbb{R}^k : x_i \in A \} ) = \mu_i(A)
\end{align*}
for each $1 \leq i \leq k$ and every Borel subset $A$ of $\mathbb{R}$. Writing $Z = (Z_1,\ldots,Z_k)$ for the coordinates of a random variable distributed according to $\Pi$, we will also refer to $\Pi$ as a coupling of the random variables $Z_1,\ldots,Z_k$.

The quantile function of a probability measure $\mu$ is the unique non-decreasing right-continuous function $Q:(0,1)\to\mathbb{R}$ satisfying
\begin{align} \label{eq:qrel}
\int_{-\infty}^{\infty} f(x)\,\mu(\mathrm{d}x)
=
\int_0^1 f(Q(r))\,\mathrm{d}r
\end{align}
for every bounded measurable function $f:\mathbb{R}\to\mathbb{R}$.

If $Q$ is the quantile function of a measure of the form $\mu = \frac{1}{n}\sum_{j=1}^n \delta_{a_j}$, then $Q$ is constant on each interval of the form $[(j-1)/n,j/n)$.

We write $\Phi:\mathbb{R} \to [0,1]$ for the distribution function of the standard Gaussian density, and $\Phi^{-1}:(0,1) \to \mathbb{R}$ for the associated quantile function. 

A $k$-dimensional copula $\pi$ is a coupling of $(\mu_1,\ldots,\mu_k)$ in the case where
\[\mu_1 = \cdots = \mu_k = \text{Lebesgue measure on $[0,1]$}.\] 
Given a copula $\pi$ and probability measures $\mu_1,\ldots,\mu_k$ with cumulative distribution functions $F_1,\ldots,F_k$, we may associate a coupling of $\mu_1,\ldots,\mu_k$ by letting
\begin{align} \label{eq:CC}
\Pi ( \{ x \in \mathbb{R}^k : x_1 \leq y_1,\ldots,x_k \leq y_k \} ) := \pi( \{ r \in [0,1]^k : r_1 \leq F_1(y_1),\ldots,r_k\leq F_k(y_k) \}).
\end{align} 
This sets up a correspondence between couplings of $\mu_1,\ldots,\mu_k$ and copulas, which is one-to-one whenever $\mu_1,\ldots,\mu_k$ have no atoms.
If the $\mu_i$ have atoms, then distinct copulas may give rise to the same coupling, see e.g.\ \cite[Section 3.1]{AJ}. Another way of looking at the relation \eqref{eq:CC} is that
\begin{align} \label{eq:CC2}
(U_1,\ldots,U_k) \sim \pi \implies (Q_1(U_1),\ldots,Q_k(U_k)) \sim \Pi,
\end{align}
where $Q_i$ is the quantile function of $\mu_i$.
Thus every coupling $\Pi$ of probability laws $\mu_1,\ldots,\mu_k$ arises through a $k$-tuple of marginally uniform random variables $(U_1,\ldots,U_k)$ distributed according to some copula $\pi$. We will make good use of this fact in both the remainder of the present section and also in Section \ref{sec:lower}.

\subsection{Empirical coordinate measures under vector addition}
In this section we explore how empirical coordinate measures behave under taking convex combinations of vectors. 

To set up this idea, we use a construction from \cite{CHS}. Given a $k$-tuple $\bm \sigma = (\sigma_1,\ldots,\sigma_k)$ of permutations in $\mathcal{S}_n$ we define their coupling measure $C^{\bm \sigma}(\mathrm{d}r) = C^{\bm \sigma}(r)\mathrm{d}r$ to be the $k$-dimensional copula with probability density function $C^{\bm \sigma}:[0,1]^k \to [0,\infty)$ given by 
\begin{align} \label{eq:cd}
C^{\bm \sigma}(r) := n^{k-1}\sum_{j=1}^n\prod_{i=1}^k \mathrm{1} \left\{ r_i \in \left[ \frac{\sigma^{-1}_i(j)-1}{n} , \frac{\sigma^{-1}_i(j)}{n} \right) \right\}.
\end{align} 

As a slight abuse of notation, we will interchange between referring to  $C^{\bm \sigma}$ as a density function on $[0,1]^k$ and as a probability measure on $[0,1]^k$.

The following lemma captures how empirical coordinate measures of convex combinations of vectors are encoded by coupling measures $C^{\bm \sigma}$:

\begin{lem}  \label{lem:probrep}
Let $s_1,\ldots,s_k$ be vectors in $\mathbb{R}^n$, let $\mu_{1},\ldots,\mu_{k}$ be the laws of their empirical coordinate measures and let $\sigma_{1},\ldots,\sigma_{k}$ be their coordinate ordering permutations (i.e.\ $\mu_i := \mu_{s_i}$ and $\sigma_i := \sigma_{s_i}$). Let $Q_1,\ldots,Q_k$ be the respective quantile functions of $\mu_1,\ldots,\mu_k$. Then the empirical coordinate measure $\mu_s$ of the convex combination $s = \lambda_1 s_1 + \cdots + \lambda_k s_k$ is the law of the random variable
\begin{align*}
Z = \lambda_1 Q_1(U_1) + \cdots + \lambda_k Q_k(U_k),
\end{align*}
where $(U_1,\ldots,U_k)$ is distributed according to the coupling measure $C^{\bm \sigma}$ on $[0,1]^k$ associated with the $k$-tuple $\bm \sigma = (\sigma_1,\ldots,\sigma_k)$. 
\end{lem}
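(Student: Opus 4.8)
The plan is to unwind both sides of the asserted identity into statements about explicit finite sums over $i \in \{1,\ldots,n\}$, and check they agree. First I would fix vectors $s_1,\ldots,s_k \in \mathbb{R}^n$, and for each $j$ write $\sigma_j := \sigma_{s_j}$ for the ordering permutation, so that the sorted coordinates of $s_j$ are $s_j^{\sigma_j(1)} \leq \cdots \leq s_j^{\sigma_j(n)}$. The quantile function $Q_j$ of $\mu_{s_j} = \frac1n\sum_{\ell}\delta_{s_j^\ell}$ is then the step function with $Q_j(r) = s_j^{\sigma_j(\lceil nr\rceil)}$ for $r \in (0,1]$; equivalently $Q_j(r) = s_j^{\sigma_j(m)}$ whenever $r \in \bigl(\frac{m-1}{n}, \frac{m}{n}\bigr]$. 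Restating this in terms of position rather than rank: for the coordinate index $\ell$, we have $Q_j(r) = s_j^{\ell}$ precisely when $r \in \bigl(\frac{\sigma_j^{-1}(\ell)-1}{n}, \frac{\sigma_j^{-1}(\ell)}{n}\bigr]$, since $\sigma_j^{-1}(\ell)$ is the rank of the $\ell$-th coordinate.

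Next I would compute the law of $Z = \lambda_1 Q_1(U_1) + \cdots + \lambda_k Q_k(U_k)$ when $(U_1,\ldots,U_k) \sim C^{\bm\sigma}$. By the definition \eqref{eq:cd}, $C^{\bm\sigma}$ is a mixture, with equal weights $1/n$, of the product measures $\nu_i := \bigotimes_{j=1}^k \mathrm{Unif}\bigl(\frac{\sigma_j^{-1}(i)-1}{n}, \frac{\sigma_j^{-1}(i)}{n}\bigr]$ for $i = 1,\ldots,n$; indeed, each term $n^{k-1}\prod_j \mathbf{1}\{r_j \in (\frac{\sigma_j^{-1}(i)-1}{n}, \frac{\sigma_j^{-1}(i)}{n}]\}$ is exactly the density of $\nu_i$. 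Under $\nu_i$, each $U_j$ lands in the interval corresponding to rank $\sigma_j^{-1}(i)$, so by the position-vs-rank observation above, $Q_j(U_j) = s_j^{i}$ almost surely. Hence under $\nu_i$ the random variable $Z$ equals the constant $\lambda_1 s_1^i + \cdots + \lambda_k s_k^i = s^i$, the $i$-th coordinate of $s = \sum_j \lambda_j s_j$. Therefore the law of $Z$ is $\frac1n\sum_{i=1}^n \delta_{s^i} = \mu_s$, which is exactly the claim.

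The only genuinely delicate point — and the one I would present carefully rather than routinely — is the tie-breaking convention when some $s_j$ has repeated coordinates. In that case $\mu_{s_j}$ has an atom, $Q_j$ is still a well-defined step function, and the formula $Q_j(r) = s_j^{\sigma_j(\lceil nr\rceil)}$ still holds, but the identity "$Q_j(U_j) = s_j^i$ when $U_j$ is in the $i$-th block" requires that the ordering permutation $\sigma_j$ is chosen consistently with the stated tie-breaking rule (coordinates with equal value are ranked by index), so that the block $\bigl(\frac{\sigma_j^{-1}(i)-1}{n}, \frac{\sigma_j^{-1}(i)}{n}\bigr]$ is assigned the value $s_j^i$; since $Q_j$ takes the same value $s_j^i$ on every block corresponding to a coordinate equal to $s_j^i$, the conclusion $Q_j(U_j) = s_j^i$ $\nu_i$-a.s.\ is in fact insensitive to the choice, and the argument goes through verbatim. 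Once this is checked, the proof is just the assembly of the three observations above (identification of $Q_j$ as a step function, recognition of $C^{\bm\sigma}$ as the uniform mixture of the $\nu_i$, and evaluation of $Z$ under each $\nu_i$), so I do not anticipate any real obstacle beyond bookkeeping.
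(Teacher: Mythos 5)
Your proof is correct and follows essentially the same route as the paper: recognize $C^{\bm\sigma}$ as the uniform mixture over $i=1,\ldots,n$ of product measures supported on hypercubes of side $1/n$, observe that on the $i$-th hypercube each $Q_j(U_j)$ evaluates to the constant $s_j^i$, and conclude that $Z = \sum_j \lambda_j Q_j(U_j)$ is supported with equal mass $1/n$ on the coordinates $s^i$ of $s = \sum_j \lambda_j s_j$, i.e.\ has law $\mu_s$. The one difference is cosmetic: you are more explicit about the rank-versus-index translation and about the tie-breaking convention (and you correctly note the result is insensitive to the choice), whereas the paper's write-up is terser and in fact writes $\sigma_j(i)$ where, matching the definition \eqref{eq:cd}, it should be $\sigma_j^{-1}(i)$ in the description of the hypercubes $H_i$; your version has the indexing right.
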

\begin{proof}
For $1 \leq j \leq n$, consider the hypercube
\begin{align*}
H^{\bm \sigma}_j := \left\{ r \in [0,1]^k : \text{For each $1 \leq i \leq k$, } r_i \in \left[ \frac{\sigma^{-1}_i(j)-1}{n} , \frac{\sigma^{-1}_i(j)}{n} \right) \right\}.
\end{align*}
We have $C^{\bm \sigma}(H^{\bm \sigma}_j) = 1/n$ for each $j=1,\ldots,n$. Moreover, we note that if $r_i \in [(j-1)/n,j/n)$, then $Q_i(r_i)$ is equal to the $j^{\text{th}}$-smallest coordinate of $s_i$, which is $s_i^{\sigma_i(j)}$. Consequently, if $r_i \in [(\sigma_i^{-1}(j)-1)/n, \sigma_i^{-1}(j)/n)$, then $Q_i(r_i) = s_i^j$. 
It follows that on the event $\{ (U_1,\ldots,U_k) \in H^{\bm \sigma}_j\}$ we have $\lambda_1 Q_1(U_1)+ \cdots + \lambda_k Q_k(U_k) = \lambda_1 s_1^j + \cdots + \lambda_k s_k^j = s^j$, 
where $s^j$ is the $j^{\text{th}}$ coordinate of $s = \lambda_1 s_1 + \cdots + \lambda_k s_k$. 

Provided that the coordinates of $s$ are distinct, it follows that $\mathbf{P}( Z = s^j ) =1/n$. If $s$ has $\ell \geq 2$  coordinates equal to $s^j$, then $\mathbf{P}( Z= s^j) = \ell/n$. In any case, it follows that $Z$ is distributed according to the empirical coordinate measure of $s$.

\end{proof}

In particular, the empirical coordinate measure $\mu_{\lambda_1 s_1 + \cdots + \lambda_k s_k}$ is the distribution of a random variable $\lambda_1 Z_1 + \cdots + \lambda_k Z_k$ under some coupling of the random variables $Z_1,\ldots,Z_k$ with marginals $Z_i \sim \mu_{s_i}$.

\subsection{Wasserstein distance and basic closure properties} \label{sec:Wasser}

The Wasserstein distance between two probability measures $\mu_1$ and $\mu_2$ on $\mathbb{R}$ is defined by 
\begin{align} \label{eq:wasserdef}
W(\mu_1,\mu_2 ) := \inf_{ \Pi } \int_{\mathbb{R}^2}  |x_2-x_1|~ \Pi(\mathrm{d}x),
\end{align}
where the infimum is taken over all couplings $\Pi$ of $\mu_1$ and $\mu_2$. It is possible to show that if $\mu_1$ and $\mu_2$ have respective distribution functions $F_1,F_2: \mathbb{R} \to [0,1]$, or alternatively quantile functions $Q_1,Q_2 :(0,1) \to \mathbb{R}$, then we have
\begin{align} \label{eq:wasser}
W(\mu_1,\mu_2) := \int_{-\infty}^\infty |F_2(x) - F_1(x)| \mathrm{d}x = \int_0^1 |Q_2(r) - Q_1(r)| \mathrm{d}r;
\end{align}
see \cite[Proposition 2.17]{santa}. 

There is a natural coupling that achieves the infimum in \eqref{eq:wasserdef}. Namely, let $U$ be a random variable uniformly distributed on $[0,1]$, and let $\Pi$ be the law of the pair $(Q_1(U),Q_2(U))$. Then $Q_1(U)$ and $Q_2(U)$ respectively have the laws $\mu_1$ and $\mu_2$, and 
\begin{align} \label{eq:uniwasser}
W(\mu_1,\mu_2) = \mathbf{E}[|Q_2(U)-Q_1(U)|].
\end{align}

A ball of radius $\delta$ in the Wasserstein metric around a probability measure $\mu$ on $\mathbb{R}$ is simply a set of the form
\begin{align*}
B( \mu, \delta ) := \{ \nu \text{ probability measure on $\mathbb{R}$}: W(\nu,\mu) \leq \delta \}.
\end{align*}
More generally, if $S$ is a set of probability measures on $\mathbb{R}$, we write
\begin{align*}
B( S,\delta) := \bigcup_{ \mu \in S} B(\mu,\delta).
\end{align*}
Recall that if $\gamma$ is the standard Gaussian law, we define $E_n(\delta) := \{ s \in \mathbb{R}^n : W(\mu_s,\gamma) \leq \delta \} = \{ s \in \mathbb{R}^n : \mu_s \in B(\gamma,\delta) \}$.

We now study how Wasserstein distances interact with convex combinations of random variables under couplings.

Recall that $\mathcal{M}_k$, defined in \eqref{eq:Mkdef}, is the set of probability laws that govern the sum of a convex combination of $k$ Gaussian random variables. More generally, let us define
\begin{align*}
&\mathcal{M}(\mu_1,\ldots,\mu_k)\\
&:= \{ \mu: \text{ $\mu$ is the law of convex combination $\lambda_1 Z_1 + \cdots + \lambda_k Z_k$, $Z_i \sim \mu_i$} \} 
\end{align*}
to be the set of laws that occur as a convex combination of $k$ random variables with marginal laws $\mu_1,\ldots,\mu_k$ under any coupling. Of course we have
$\mathcal{M}_k := \mathcal{M}(\gamma,\ldots,\gamma)$ for the case when $\mu_1 = \cdots = \mu_k = \gamma$. 

\begin{lem} \label{lem:stable}
Let $\lambda_1,\ldots,\lambda_k \in \mathbb{R}$. 
Let $\mu_1,\ldots,\mu_k$ and $\tilde{\mu}_1,\ldots,\tilde{\mu}_k$ be probability measures on $\mathbb{R}$ with associated quantile functions $Q_1,\ldots,Q_k$ and $\tilde{Q}_1,\ldots,\tilde{Q}_k$. Suppose we have probability laws $\mu$ and $\tilde{\mu}$ defined as laws of random variables 
\begin{align*} 
\lambda_1 Q_1(U_1) + \cdots + \lambda_k Q_k(U_k) \sim \mu \qquad \text{and} \qquad \lambda_1 \tilde{Q}_1(U_1) + \cdots + \lambda_k \tilde{Q}_k(U_k) \sim \tilde\mu,
\end{align*}
where $(U_1,\ldots,U_k)$ are distributed according to some copula $\pi$ on $[0,1]^k$.

Then
\begin{align*}
W(\mu,\tilde{\mu}) \leq \sum_{i=1}^k |\lambda_i| W(\mu_i,\tilde{\mu}_i).
\end{align*}
\end{lem}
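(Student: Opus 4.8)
The plan is to exhibit an explicit coupling of $\mu$ and $\tilde\mu$ that realises a bound of the desired shape, and then to recognise each resulting one-dimensional contribution as a $W$-optimal (monotone) coupling. First I would observe that, by the very hypotheses of the lemma, the pair
\begin{align*}
\Big( \lambda_1 Q_1(U_1) + \ldots + \lambda_k Q_k(U_k), \ \lambda_1 \tilde{Q}_1(U_1) + \ldots + \lambda_k \tilde{Q}_k(U_k) \Big), \qquad (U_1,\ldots,U_k) \sim \pi,
\end{align*}
is a coupling of $\mu$ and $\tilde\mu$. Since the Wasserstein distance is an infimum over couplings, and using the triangle inequality together with $\lambda_i \geq 0$,
\begin{align*}
W(\mu,\tilde{\mu}) \leq \mathbb{E}\Big[ \Big| \sum_{i=1}^k \lambda_i Q_i(U_i) - \sum_{i=1}^k \lambda_i \tilde{Q}_i(U_i) \Big| \Big] \leq \sum_{i=1}^k \lambda_i \, \mathbb{E}\big[ |Q_i(U_i) - \tilde{Q}_i(U_i)| \big].
\end{align*}

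Next I would use that $\pi$ is a copula: every marginal of $\pi$ is Lebesgue measure on $[0,1]$, so each $U_i$ is uniformly distributed on $[0,1]$. Consequently $Q_i(U_i) \sim \mu_i$ and $\tilde{Q}_i(U_i) \sim \tilde{\mu}_i$, and the pair $(Q_i(U_i), \tilde{Q}_i(U_i))$ is exactly the monotone coupling of $\mu_i$ and $\tilde\mu_i$. By the quantile representation of the Wasserstein distance, namely the right-hand identity in \eqref{eq:wasser} applied via \eqref{eq:qrel},
\begin{align*}
\mathbb{E}\big[ |Q_i(U_i) - \tilde{Q}_i(U_i)| \big] = \int_0^1 |Q_i(r) - \tilde{Q}_i(r)| \, \mathrm{d}r = W(\mu_i,\tilde{\mu}_i).
\end{align*}
Combining the two displays yields $W(\mu,\tilde{\mu}) \leq \sum_{i=1}^k \lambda_i W(\mu_i,\tilde{\mu}_i)$, as claimed.

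I do not expect a serious obstacle here; the only points needing (minor) care are the standard facts that pushing a uniform random variable forward by a quantile function produces the corresponding law, and that the monotone coupling is $W$-optimal in one dimension — both already recorded in Section \ref{sec:OT}. It is worth noting that the argument uses only $\lambda_i \geq 0$ and not $\sum_i \lambda_i = 1$, so the estimate in fact holds for arbitrary nonnegative coefficients, which is the form in which it will be convenient to invoke it when combined with Lemma \ref{lem:probrep} to prove Proposition \ref{prop:comma}.
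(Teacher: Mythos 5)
Your argument is exactly the paper's proof: bound $W(\mu,\tilde\mu)$ by the explicit coupling induced by $\pi$, apply the triangle inequality using $\lambda_i\geq 0$, and recognise each resulting term as the monotone (quantile) coupling whose cost equals $W(\mu_i,\tilde\mu_i)$ by \eqref{eq:wasser}. Your closing observation that the bound needs only $\lambda_i\geq 0$ and not $\sum_i\lambda_i=1$ is correct and harmless, though the paper never needs that extra generality.
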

\begin{proof}
Using the definition of Wasserstein distance to obtain the first inequality below, and then the triangle inequality to obtain the second, we have 
\begin{align*}
W(\mu,\tilde{\mu}) &\leq \mathbf{E} \left[\left|(\lambda_1 Q_1(U_1) + \cdots + \lambda_k Q_k(U_k)) - (\lambda_1 \tilde{Q}_1(U_1) + \cdots + \lambda_k \tilde{Q}_k(U_k) ) \right| \right]\\
&\leq \sum_{i=1}^k| \lambda_i| \mathbf{E}[|Q_i(U_i) - \tilde{Q}_i(U_i)|]= \sum_{i=1}^k |\lambda_i |W(\mu_i,\tilde{\mu}_i),
\end{align*}
where the final equality above follows from \eqref{eq:uniwasser}. That completes the proof. 
\end{proof}
We highlight the following corollary:
\begin{cor} \label{cor:stable}
If $\mu_1,\ldots,\mu_k$ are elements of $B(\gamma,\delta)$, then
\begin{align*}
\mathcal{M}(\mu_1,\ldots,\mu_k) \subseteq B( \mathcal{M}_k,\delta).
\end{align*}
\end{cor}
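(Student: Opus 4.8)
The plan is to deduce Corollary \ref{cor:stable} directly from Lemma \ref{lem:stable}. Let $\mu \in \mathcal{M}(\mu_1,\ldots,\mu_k)$ be arbitrary. By definition of $\mathcal{M}(\mu_1,\ldots,\mu_k)$, there are weights $\lambda_1,\ldots,\lambda_k \geq 0$ with $\sum_i \lambda_i = 1$ and a coupling of random variables $(Z_1,\ldots,Z_k)$ with $Z_i \sim \mu_i$ such that $\mu$ is the law of $\lambda_1 Z_1 + \ldots + \lambda_k Z_k$. The first step is to rewrite this coupling in copula form: by the correspondence \eqref{eq:CC2}, every coupling of $\mu_1,\ldots,\mu_k$ arises as $(Q_1(U_1),\ldots,Q_k(U_k))$ for some copula $\pi$ on $[0,1]^k$, where $Q_i$ is the quantile function of $\mu_i$. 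Hence $\mu$ is the law of $\lambda_1 Q_1(U_1) + \ldots + \lambda_k Q_k(U_k)$ with $(U_1,\ldots,U_k) \sim \pi$.

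Next I would build the comparison measure in $\mathcal{M}_k$. Let $\tilde{Q} = \Phi^{-1}$ be the quantile function of $\gamma$, and define $\tilde{\mu}$ to be the law of $\lambda_1 \tilde{Q}(U_1) + \ldots + \lambda_k \tilde{Q}(U_k)$ using the \emph{same} copula $\pi$ and the \emph{same} weights $\lambda_i$. Since each $\tilde{Q}(U_i) \sim \gamma$, the random variable $\lambda_1 \tilde{Q}(U_1) + \ldots + \lambda_k \tilde{Q}(U_k)$ is a convex combination of $k$ (dependent) standard Gaussians, so $\tilde{\mu} \in \mathcal{M}_k$ by \eqref{eq:Mkdef}.

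Now apply Lemma \ref{lem:stable} with $\tilde{\mu}_i = \gamma$ and $\tilde{Q}_i = \Phi^{-1}$ for all $i$: the hypothesis that $\mu$ and $\tilde{\mu}$ are given by the same copula $\pi$ is exactly what we arranged, so the lemma yields
\begin{align*}
W(\mu,\tilde{\mu}) \leq \sum_{i=1}^k \lambda_i W(\mu_i,\gamma) \leq \sum_{i=1}^k \lambda_i \delta = \delta,
\end{align*}
where the second inequality uses the assumption $\mu_i \in B(\gamma,\delta)$ and the last equality uses $\sum_i \lambda_i = 1$. Thus $\mu \in B(\tilde{\mu},\delta) \subseteq B(\mathcal{M}_k,\delta)$. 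Since $\mu \in \mathcal{M}(\mu_1,\ldots,\mu_k)$ was arbitrary, this proves $\mathcal{M}(\mu_1,\ldots,\mu_k) \subseteq B(\mathcal{M}_k,\delta)$.

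The argument is essentially bookkeeping once Lemma \ref{lem:stable} is in hand; the only genuinely load-bearing point — and the thing to be careful about — is that the copula representation \eqref{eq:CC2} lets us express the \emph{given} coupling and the \emph{constructed} Gaussian coupling through one and the same $\pi$, which is precisely the shared-copula hypothesis Lemma \ref{lem:stable} requires. There is no real obstacle; one should just make sure the weights $\lambda_i$ in $\mu$ and in the comparison measure $\tilde\mu$ are taken to be identical, so that the difference bounded inside the proof of Lemma \ref{lem:stable} is a termwise difference $\lambda_i(Q_i(U_i) - \tilde Q_i(U_i))$ with no cross terms.
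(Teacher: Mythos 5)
Your proof is correct and follows exactly the same route as the paper: represent $\mu$ via a copula $\pi$ and quantile functions, form $\tilde\mu$ from the same copula and weights with $\Phi^{-1}$ in place of each $Q_i$, then invoke Lemma \ref{lem:stable} and $\sum_i\lambda_i=1$ to get $W(\mu,\tilde\mu)\le\delta$. The extra care you take in noting that \eqref{eq:CC2} guarantees the given coupling can be put in copula form is a sound and slightly more explicit version of the step the paper states tersely.
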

\begin{proof}
Let $\mu \in \mathcal{M}(\mu_1,\ldots,\mu_k)$. Then $\mu$ is the law of a random variable of the form $\lambda_1 Q_1(U_1) + \cdots + \lambda_k Q_k(U_k)$ where $(U_1,\ldots,U_k)$ are distributed according to some copula $\pi$ on $[0,1]^k$, 
and $\lambda_i \in [0,1]$ with $\sum_{i=1}^k \lambda_i =1$. 

We may now construct an element $\tilde{\mu}$ of $\mathcal{M}_k$ by letting $\tilde{\mu}$ be the law of 
$\lambda_1 \Phi^{-1}(U_1) + \cdots + \lambda_k \Phi^{-1}(U_k)$. 

By the previous lemma, since $W(\mu_i,\gamma) \leq \delta$ for each $i$, it follows that $W(\mu,\tilde{\mu}) \leq \delta$. In particular, $\mu$ lies within a Wasserstein distance $\delta$ of an element $\tilde{\mu}$ in $\mathcal{M}_k$, as required. 

\end{proof}
We are now equipped to prove Proposition \ref{prop:comma}, which states that $\mathrm{conv}_k(E_n(\delta)) \subseteq \{ s \in \mathbb{R}^n : \mu_s \in B(\mathcal{M}_k,\delta) \}$.

\begin{proof}[Proof of Proposition \ref{prop:comma}]
Let $s \in \mathrm{conv}_k(E_n(\delta))$. Then $s$ can be written as a convex combination $s = \lambda_1 s_1 + \cdots + \lambda_k s_k$ for some $s_1,\ldots,s_k \in E_n(\delta)$. By Lemma \ref{lem:probrep}, $\mu_s$ lies in $\mathcal{M}(\mu_{s_1},\ldots,\mu_{s_k})$. Recalling that $E_n(\delta) := \{s \in \mathbb{R}^n : \mu_s \in B(\gamma,\delta)\}$, it follows from Corollary \ref{cor:stable} that $\mu_s \in B(\mathcal{M}_k,\delta)$.
\end{proof}

\subsection{Exceedances and Wasserstein distances}
Generalising an earlier definition, we define the exceedance at $w \in \mathbb{R}$ of a probability measure $\mu$ to be the amount of measure it gives to $[w,\infty)$, that is
\begin{align*}
\mathrm{Exc}_w(\mu) := \mu([w,\infty)).
\end{align*}
The exceedance takes values in $[0,1]$ and is a nonincreasing function of $w$.

If two measures are close in terms of Wasserstein distances, their exceedances are closely related:

\begin{lem} \label{lem:wasser}
If $W(\mu,\nu) \leq \delta$ then for any $w \in \mathbb{R}$ we have
\begin{align} \label{eq:law1}
\mathrm{Exc}_w(\mu) \geq \mathrm{Exc}_{w+\sqrt{\delta}}(\nu)- \sqrt{\delta}.
\end{align}
\end{lem}
\begin{proof}
Let $F$ and $G$ denote the respective distribution functions of $\mu$ and $\nu$. Then $\mathrm{Exc}_w(\mu) := 1 - F(w-)$ and $\mathrm{Exc}_w(\nu) = 1 - G(w-)$. 
Here, $F(w-) := \lim_{ u \uparrow w} F(u)$ denotes the left limit of $F$ at $w$. In particular, using 
the first equality in \eqref{eq:wasser} to obtain the first equality below, we have
\begin{align*}
\delta \geq W(\mu,\nu) &= \int_{-\infty}^\infty|G(w) - F(w)| \mathrm{d}w = \int_{-\infty}^\infty |G(w-) - F(w-)| \mathrm{d}w\\
&= \int_{-\infty}^\infty |\mathrm{Exc}_w(\nu) - \mathrm{Exc}_w(\mu)| \mathrm{d}w \geq  \int_w^{w+\sqrt{\delta}}  (\mathrm{Exc}_u(\nu) - \mathrm{Exc}_u(\mu)) \mathrm{d}u\\
&\geq \sqrt{\delta} ( \mathrm{Exc}_{w+\sqrt{\delta}}(\nu) - \mathrm{Exc}_w(\mu)),
\end{align*}
where the final equality above follows from the fact that $\mathrm{Exc}_w(\mu)$ and $\mathrm{Exc}_w(\nu)$ are nonincreasing in $w$. Rearranging, we obtain \eqref{eq:law1}.
\end{proof}

\section{Proof of Proposition \ref{prop:DKW}} \label{sec:DKW}

In this brief section we prove Proposition \ref{prop:DKW}, which states that for large $n$, the set of points in $\mathbb{R}^n$ whose empirical coordinate measures lie within Wasserstein distance $\delta_n = C n^{-1/3}$ of the standard Gaussian distribution is overwhelmingly large in terms of $n$-dimensional Gaussian measure.

Our proof in this section relies on the Dvoretzky–Kiefer–Wolfowitz inequality \cite{DKW}. Recall that $\mu_s$ is the empirical coordinate measure of a vector $s$. Let
\begin{align*}
F_s(x) := \frac{1}{n} \# \{ 1 \leq j \leq n : s_j \leq x \}
\end{align*}
be the distribution function associated with $\mu_s$. 

The precise form of the Dvoretzky–Kiefer–Wolfowitz inequality we use states that for any $\lambda > 0$ we have

\begin{align} \label{eq:fed}
\gamma_n \left( \left\{ s \in \mathbb{R}^n : \sup_{ x \in \mathbb{R} } |F_s(x) - \Phi(x) | > \lambda/\sqrt{n} \right\} \right) \leq 2 e^{ - 2\lambda^2};
\end{align}
see Corollary 1 of \cite{massart}.

\begin{proof}[Proof of Proposition \ref{prop:DKW}]
By the union bound and a standard Gaussian tail bound we have 
\begin{align} \label{eq:fed1}
\gamma_n \left( \left\{ s \in \mathbb{R}^n : |s_j| \leq 2 \sqrt{\log(n)} ~ \text{for each $1 \leq j \leq n$} \right\} \right) 
\geq 1 - C/n.
\end{align}
Let 
\begin{align} \label{eq:fed2}
\Gamma_n := \left\{ s \in \mathbb{R}^n :  \sup_{ x \in \mathbb{R} } |F_s(x) - \Phi(x) | \leq \sqrt{\log(n)/n}, |s_j| < 2 \sqrt{\log(n)} ~\forall 1 \leq j \leq n \right\}.
\end{align}
Setting $\lambda = \sqrt{\log(n)}$ in \eqref{eq:fed} and combining the resulting bound with \eqref{eq:fed1} we have
\begin{align*}
\gamma_n( \Gamma_n) \geq 1 - C/n,
\end{align*}
for a possibly different universal constant $C > 0$.

We now show that every element $s$ of $\Gamma_n$ satisfies $W(\mu_s,\gamma) \leq C n^{-1/3}$. Indeed, using \eqref{eq:wasser} to obtain the initial equality below we have
\begin{align*}
&W(\mu_s, \gamma) = \int_{-\infty}^\infty | F_s(x) - \Phi(x) | \mathrm{d}x \\
&=  \int_{-2\sqrt{\log(n)}}^{2\sqrt{\log(n)}}  | F_s(x) - \Phi(x) | \mathrm{d}x\\
&+ \int_{-\infty}^{-2 \sqrt{\log(n)} }   | F_s(x) - \Phi(x) | \mathrm{d}x+  \int_{2 \sqrt{\log(n)} }^\infty    | F_s(x) - \Phi(x) | \mathrm{d}x.
\end{align*}
If $s \in \Gamma_n$, then $F_s(x) = 0$ for all $x \leq - 2 \sqrt{\log(n)}$, $F_s(x) = 1$ for all $x \geq 2 \sqrt{\log(n)}$, and $|F_s(x) - \Phi(x)| \leq \sqrt{\log(n)/n}$ for all $|x| \leq 2 \sqrt{\log(n)}$. Consequently, for $s \in \Gamma_n$ we have 
\begin{align*}
W(\mu_s, \gamma) & \leq 4\sqrt{\log(n)} \cdot \frac{\sqrt{\log(n)}}{\sqrt{n}} +  \int_{-\infty}^{-2 \sqrt{\log(n)} } \Phi(x) \mathrm{d}x +  \int_{2 \sqrt{\log(n)} }^\infty (1 - \Phi(x)) \mathrm{d}x \\
& \leq C \log(n)/\sqrt{n} \leq C n^{-1/3} =: \delta_n,
\end{align*}
where to obtain the second inequality above we have used the fact that $\int_{-\infty}^{-r} \Phi(x)\mathrm{d}x \leq C e^{ - cr^2 }$ with $r = \sqrt{2 \log(n)}$. 

That proves that every element $s$ of $\Gamma_n$ has $W(\mu_s,\gamma) \leq Cn^{-1/3}$, which implies $\Gamma_n \subseteq E_n(\delta_n)$. Since $\Gamma_n$ has Gaussian measure at least $1 - C/n$, so does $A_n := E_n(\delta_n)$, completing the proof of Proposition \ref{prop:DKW}.

 \end{proof}

\section{Upper and lower bounds for exceedances of convex sums of Gaussian random variables} \label{sec:upper}

This section is dedicated to proving upper and lower bounds for the supremum $S_k$ of exceedance probabilities $\mathbf{P}_\Pi(\lambda_1 Z_1 + \cdots + \lambda_k Z_k \geq 1)$, taken over all couplings $\Pi$ and all convex combinations.

\subsection{Preliminaries} \label{sec:prelim}

Let $\gamma(u) = (2\pi)^{-1/2}e^{-u^2/2}$ denote the standard one-dimensional Gaussian probability density function and let
\begin{align*}
\Phi(x) := \int_{-\infty}^x \gamma(u) \mathrm{d}u
\end{align*}
be its cumulative distribution function.

Using the fact $\gamma'(u) = - u\gamma(u)$ and $\gamma(u) = \gamma(-u)$, we have the relations
\begin{align} \label{eq:rel1}
\Phi'(x) = \gamma(x) = - \int_{-\infty}^x u \gamma(u)\mathrm{d}u = \int_{-x}^\infty u \gamma(u) \mathrm{d}u= \int_x^\infty u \gamma(u) \mathrm{d}u,
\end{align}

and
\begin{align} \label{eq:rel2}
 \Phi''(x) = -x\Phi'(x).
\end{align}
Note that if $Z$ is standard Gaussian, then
\begin{align} \label{eq:condexp}
\mathbf{E}[ Z | Z > - y ] = \frac{\int_{-y}^\infty u \gamma(u)\mathrm{d}u }{ \int_{-y}^\infty \gamma(u)\mathrm{d}u } =  \frac{ \Phi'(y)}{ \Phi(y)}.
\end{align}
Observe that $R(y) := \Phi'(y)/\Phi(y)$ is a monotone decreasing function in $y$, and satisfies $\lim_{y \downarrow -\infty} R(y) = + \infty$ and $\lim_{y \uparrow +\infty} R(y) = 0$. It follows that for each $w > 0$ there is a unique $y_w \in \mathbb{R}$ and a unique $p_w \in (0,1)$ satisfying 
\begin{align} \label{eq:py}
\Phi'(y_w)/\Phi(y_w) = w \qquad \text{and} \qquad p_w = \Phi(y_w).
\end{align}
From \eqref{eq:condexp} and \eqref{eq:py}, $-y_w < w$, so that $y_w + w > 0$. 

Alternatively, if we define for $p \in (0,1]$ the function
\begin{align} \label{eq:Qdef}
Q(p) := \Phi'(\Phi^{-1}(p))/p,
\end{align}
then $Q(p)$ is also monotone decreasing with $\lim_{p \downarrow 0} Q(p) = + \infty$ and $Q(1) = 0$, and $p_w$ is the unique solution to $Q(p_w) = w$.

\subsection{Upper bounds} \label{sec:upper2}

In this section, we prove Theorem \ref{thm:upper}, which states that if $s \in \mathrm{conv}_k(E_n(\delta))$, then its exceedance satisfies $\mathrm{Exc}_1(\mu_s) \leq p_1 - c e^{ - Ck^2 } + C \sqrt{\delta}$ for some universal constants $c,C > 0$. 

In fact, the bulk of our work will be in proving the preliminary optimal transport bound, Theorem \ref{thm:MK}, which is an upper bound on 
\begin{align*}
S_k := \sup_\Pi \sup_{\lambda_1,\ldots,\lambda_k} \mathbf{P}_\Pi ( \lambda_1 Z_1 + \cdots + \lambda_k Z_k \geq 1 ).
\end{align*}

In fact, we will work more generally by controlling suprema of $\mathbf{P}_\Pi \left(  \lambda_1 Z_1 + \cdots + \lambda_k Z_k \geq w \right)$ for $w > 0$. Before using more sophisticated methods from optimal transport to bound $S_k$, we will begin by using a simple first moment argument to prove an upper bound on $S_k$ that is universal in all couplings, all integers $k\geq 1$, and all convex combinations. While we will not have recourse to use this first moment argument directly at any point in the sequel, its proof involves a simple calculation that will help motivate the sharper results we obtain later.

\begin{lem} \label{lem:firstmoment}
Let $w > 0$. Then for all couplings $\Pi$ of a $k$-tuple $(Z_1,\ldots,Z_k)$ of standard Gaussian random variables, and all $\lambda_i \in [0,1]$ with $\sum_{i=1}^k \lambda_i = 1$, we have 
\begin{align} \label{eq:firstmoment}
\mathbf{P}_\Pi \left( \lambda_1 Z_1 + \ldots + \lambda_k Z_k \geq w \right) \leq p_w,
\end{align}
where $p_w$ is defined in \eqref{eq:py}. 
\end{lem}
\begin{proof}
Let $\Gamma := \{\lambda_1 Z_1 + \cdots + \lambda_k Z_k \geq w \}$ and let $p = \mathbf{P}_\Pi(\Gamma)$. 
 
By linearity and the fact that $\mathbf{E}_\Pi[Z_i]= 0$ for each $1 \leq i \leq k$, we have
\begin{align} \label{eq:moon0}
0 = \mathbf{E}_\Pi [ ( \lambda_1 Z_1 + \cdots + \lambda_k Z_k ) \mathrm{1}_\Gamma ]  + \sum_{i=1}^k \lambda_i \mathbf{E}_\Pi [ Z_i \mathrm{1}_{\Gamma^c} ].
\end{align}
We now look for a lower bound on the right-hand side of \eqref{eq:moon0}. Note that if $B$ is any event with probability $p'$, and $Z$ is a standard Gaussian random variable, then
\begin{align} \label{eq:77}
\mathbf{E}[Z \mathrm{1}_{B} ] \geq \int_{-\infty}^{\Phi^{-1}(p')} u \gamma(u)\mathrm{d}u = - \Phi'(\Phi^{-1}(p')). 
\end{align}
Setting $B := \Gamma^c$ to be the complement of $\Gamma$ (so that $p' = 1-p$) and using the definition of $\Gamma$, we obtain from \eqref{eq:moon0} and \eqref{eq:77} the inequality 
\begin{align} \label{eq:moon}
0 \geq pw - \Phi'(\Phi^{-1}(1-p)).
\end{align}
By symmetry, $\Phi'(\Phi^{-1}(1-p)) = \Phi'(\Phi^{-1}(p))$, so that with the notation of \eqref{eq:Qdef}, after some rearrangement \eqref{eq:moon} reads $Q(p) \geq w$. Since $Q(p)$ is monotone decreasing, the inequality $Q(p) \geq w = Q(p_w)$ implies $p \leq p_w$, which is precisely the statement of the result.

\end{proof}

We now refine the first moment bound in Lemma \ref{lem:firstmoment} using the Monge-Kantorovich duality idea outlined in equation \eqref{eq:MKduality00}. 
Observe that if $f_1,\ldots,f_k:\mathbb{R} \to \mathbb{R}$ are any measurable functions satisfying 
\begin{align} \label{eq:MKpre}
\mathrm{1} \{ \lambda_1 z_1 + \ldots + \lambda_k z_k \geq w \} \leq \sum_{i=1}^k f_i(z_i),
\end{align}
then by taking expectations under $\Pi$ through \eqref{eq:MKpre} we obtain in the setting of Lemma \ref{lem:firstmoment} the inequality
\begin{align} \label{eq:MKupper}
\mathbf{P}_\Pi \left(  \lambda_1 Z_1 + \cdots + \lambda_k Z_k \geq w \right) \leq \mathbf{E}_\Pi \left[ \sum_{i=1}^k f_i(Z_i) \right] = \sum_{i=1}^k \mathbf{E}_\gamma[ f_i(Z) ],
\end{align}
where $Z$ is a one-dimensional standard Gaussian random variable under $\gamma$. Monge-Kantorovich duality in our case then states that in fact we have the relation
\begin{align} \label{eq:MKduality}
\mathrm{sup}_\Pi \mathbf{P}_\Pi \left(  \lambda_1 Z_1 + \cdots + \lambda_k Z_k \geq w \right) = \inf_{ f_1,\ldots,f_k }\sum_{i=1}^k \mathbf{E}_\gamma[ f_i(Z) ],
\end{align}
where the supremum is taken over all couplings $\Pi$ of $(Z_1,\ldots,Z_k)$, and the infimum is taken over all $f_1,\ldots,f_k$ satisfying \eqref{eq:MKpre}. 

To be clear, in our proof of the upcoming Theorem \ref{thm:MK2}, we will not invoke Monge-Kantorovich duality \eqref{eq:MKduality} at any stage. Rather, we simply aim to control the probability $\mathbf{P}_\Pi \left(  \lambda_1 Z_1 + \cdots + \lambda_k Z_k \geq w \right)$ using \eqref{eq:MKupper} in the knowledge that \eqref{eq:MKduality} confirms this has the potential to be a good strategy with a suitable choice of $f_1,\ldots,f_k$. 
 
We are now ready to prove Theorem \ref{thm:MK}. In fact, Theorem \ref{thm:MK} follows from setting $w=1$ in the following more general statement which we now state and prove:

\begin{thm} \label{thm:MK2}

Let $w \in [1/2,3/2]$. Then for all couplings $\Pi$ of a $k$-tuple $(Z_1,\ldots,Z_k)$ of standard Gaussian random variables, and all $\lambda_i \in [0,1]$ with $\sum_{i=1}^k \lambda_i = 1$, we have 
\begin{align} \label{eq:krefine}
\mathbf{P}_\Pi \left( \lambda_1 Z_1 + \ldots + \lambda_k Z_k \geq w \right) \leq p_w - c e^{ - Ck^2}.
\end{align}
Here $c,C > 0$ are universal constants that do not depend on $w \in [1/2,3/2]$, the integer $k \geq 1$, or on the reals $\lambda_1,\ldots,\lambda_k \in [0,1]$.
\end{thm}

Before proving Theorem \ref{thm:MK2}, we will give an alternative proof of Lemma \ref{lem:firstmoment} using the equation \eqref{eq:MKupper}. Our proof of Theorem \ref{thm:MK2} will then refine this approach.

The notation $x \vee y$ (resp.\ $x \wedge y$) denotes the maximum (resp.\ the minimum) of real numbers $x$ and $y$. 

\begin{proof}[Alternative proof of Lemma \ref{lem:firstmoment} using \eqref{eq:MKupper}]

Let $y > -w$ be a variable, and define $a_y := 1/(y+w)$, so that the function $x \mapsto 1 + a_y(x-w)$ equals zero when $x = -y$. It follows that if we let $g_y(x)$ be the nonnegative part of this function we can write 
\begin{align*}
g_y(x) := (1 + a_y(x-w)) \vee 0 = \mathrm{1}_{\{x \geq - y\}} (1 + a_y(x-w)).
\end{align*}
Setting $f_i(x) = \lambda_i g_y(x)$, we now verify that \eqref{eq:MKpre} is satisfied. Indeed, 
\begin{align*}
\sum_{i=1}^k f_i(z_i) &= \sum_{i=1}^k \lambda_i ( 1 + a_y(z_i-w)) \vee 0)\\
&\geq 0 \vee \sum_{i=1}^k \lambda_i( 1 + a_y(z_i - w))\\
&= 0 \vee \left( 1 + a_y \left( \sum_{i=1}^k \lambda_i z_i - w \right) \right)\\
& \geq \mathrm{1} \{ \lambda_1 z_1 + \cdots + \lambda_k z_k \geq w \},
\end{align*}
where in the final equality above we used the fact that $a_y > 0$ since $y > -w$. 

It follows that by \eqref{eq:MKupper} we have 
\begin{align} \label{eq:upp1}
\mathbf{P}_\Pi \left( \lambda_1 Z_1 + \ldots + \lambda_k Z_k \geq w \right) \leq \sum_{i=1}^k \mathbf{E}_\gamma[ f_i(Z)] = \mathbf{E}_\gamma[ g_y(Z)] =: H(y).
\end{align}
We now show that if $y_w$ is the minimiser of $H(y)$, then $H(y_w) = p_w$, so that from \eqref{eq:upp1} we achieve the bound in Lemma \ref{lem:firstmoment}. 
A brief calculation using \eqref{eq:rel1} and the definition of $a_y$ tells us that
\begin{align} \label{eq:Hy}
H(y) &= \int_{-y}^\infty (1 + a_y(x-w))\gamma(x) \mathrm{d}x \nonumber \\
&= (1-a_yw)\Phi(y) + a_y \Phi'(y) = \frac{1}{y+w}( y \Phi(y) + \Phi'(y) ).
\end{align}
Using \eqref{eq:Hy} to differentiate $\log H(y)$ to obtain the first equality below, and then using \eqref{eq:rel2} to obtain the second, we have
\begin{align*}
\frac{\mathrm{d}}{\mathrm{d}y} \log H(y) = - \frac{1}{y+w} + \frac{\Phi(y) + y \Phi'(y) + \Phi''(y)}{y\Phi(y)+\Phi'(y)} = - \frac{1}{y+w} + \frac{1}{y+\Phi'(y)/\Phi(y)}. 
\end{align*}
Thus $H(y)$ has a stationary point when $y$ satisfies $\Phi'(y)/\Phi(y) = w$, i.e.\ $y = y_w$ as in \eqref{eq:py}. Plugging $y = y_w$ into \eqref{eq:Hy} and using $\Phi'(y_w) = w\Phi(y_w)$ we obtain
\begin{align} \label{eq:upp2}
H(y_w ) = \Phi(y_w) = p_w,
\end{align}
where $p_w$ is as in its definition in \eqref{eq:py}. 
Substituting \eqref{eq:upp2} into \eqref{eq:upp1} yields the bound \eqref{eq:firstmoment}, completing the alternative proof of Lemma \ref{lem:firstmoment} using \eqref{eq:MKupper}.
\end{proof}

In the alternative proof of Lemma \ref{lem:firstmoment}, we set $f_i(x) := \lambda_i g_y(x)$ and then optimized over $y$. We now refine the function $f_i(x)$ occurring in this previous proof by better taking into account the quantities $\lambda_1,\ldots,\lambda_k$. This refinement leads to the $k$-dependent improvement in \eqref{eq:krefine}.

\begin{proof}[Proof of Theorem \ref{thm:MK2}]
Let us hereon write $f_i(x) = \lambda_i g_{y_w}(x)$ where $y_w$ is the minimiser of $H(y)$ in the previous proof, so that $H(y_w) = \Phi(y_w) = p_w$. Let $a_w = 1/(y_w+w)$. Clearly, the functions $f_i(x)$ are nonnegative, so that if any one of them exceeds $1$ we automatically have $\sum_{i=1}^k f_i(x_i) \geq \mathrm{1} \{ \lambda_1 x_1 + \ldots + \lambda_k x_k \geq w \}$. Thus we have some room for improvement if we consider 
\begin{align*}
\tilde{f}_i(x) := f_i(x) \wedge 1
\end{align*}
instead of $f_i(x)$ in our upper bound, since \eqref{eq:MKpre} still holds with $\tilde{f}_1,\ldots,\tilde{f}_k$ in place of $f_1,\ldots,f_k$. Again by \eqref{eq:MKupper} we have
\begin{align} \label{eq:ff} 
\mathbf{P}_\Pi \left(  \lambda_1 Z_1 + \cdots + \lambda_k Z_k \geq w \right) \leq \sum_{i=1}^k \mathbf{E}_\gamma[ \tilde{f}_i(Z) ],
\end{align}
so that since $\mathbf{E}_\gamma [ \tilde{f}_i(Z) ] < \mathbf{E}_\gamma[ f_i(Z)]$ we are set to sharpen the previous bound.

We now calculate $\mathbf{E}_\gamma [ \tilde{f}_i(Z) ]$. Letting $x_i$ be the solution to $\lambda_i (1 + a_w(x_i - w)) = 1$, i.e.\ $x_i = w + (1/\lambda_i-1)/a_w$, we can write
\begin{align} \label{eq:urform}
\tilde{f}_i(x) = f_i(x) - \lambda_i a_w ( x - x_i) \mathrm{1}_{\{ x \geq x_i \}}.
\end{align}
Applying \eqref{eq:ff}, \eqref{eq:urform}, \eqref{eq:upp1} with $y = y_w$, and \eqref{eq:upp2}, we obtain
\begin{align} \label{eq:ff2} 
\mathbf{P}_\Pi \left(  \lambda_1 Z_1 + \cdots + \lambda_k Z_k \geq w \right) \leq p_w - \sum_{i=1}^k \lambda_i a_w \int_{x_i}^\infty (x-x_i) \gamma(x) \mathrm{d}x.
\end{align}

There are constants $c, C>0$ such that $\int_{r}^\infty (x-r) \gamma(x) \mathrm{d}x \geq c e^{ -C r^2 }$ whenever $r \geq 1/2$. Moreover, we note that $x_i \geq 1/2$ whenever $w \geq 1/2$. Finally, note that there exists some $c > 0$ such that $a_w \geq c$ for all $w \in [1/2,3/2]$. In particular, using \eqref{eq:ff2}, we have
\begin{align} \label{eq:ff3} 
\mathbf{P}_\Pi \left(  \lambda_1 Z_1 + \cdots + \lambda_k Z_k \geq w \right) \leq p_w - c \sum_{i=1}^k \lambda_ie^{ - C x_i^2}.
\end{align}

Again since $a_w > c > 0$ for $w \in [1/2,3/2]$, we have $x_i \leq C + C/\lambda_i$ for some $C > 0$. Using this bound in \eqref{eq:ff3} we obtain 
\begin{align} \label{eq:ff4} 
\mathbf{P}_\Pi \left(  \lambda_1 Z_1 + \cdots + \lambda_k Z_k \geq w \right) \leq p_w - c \sum_{i=1}^k \lambda_ie^{ - C/\lambda_i^2},
\end{align}

for possibly different universal constants $c,C>0$ still not depending on $\lambda_1,\ldots,\lambda_k$, on $k$, or on $w \in [1/2,3/2]$. Set $f(\lambda_1,\ldots,\lambda_k) = \sum_{i=1}^k \lambda_ie^{ - C/\lambda_i^2}$. It is a straightforward calculation using Lagrange multipliers to verify that for any $C > 0$ we have 
\begin{align} \label{eq:LM}
\inf_{ \lambda_1,\ldots,\lambda_k } f(\lambda_1,\ldots,\lambda_k) = f(1/k,\ldots,1/k) = e^{ - Ck^2},
\end{align}
where the infimum in \eqref{eq:LM} is taken over all $\lambda_1,\ldots,\lambda_k \in [0,1]$ satisfying $\sum_{i=1}^k \lambda_i = 1$. 
Using \eqref{eq:LM} in \eqref{eq:ff4}, we obtain \eqref{eq:krefine}.
\end{proof}


Theorem \ref{thm:upper} now follows fairly quickly from Theorem \ref{thm:MK2} together with results developed in Section \ref{sec:OT}:

\begin{proof}[Proof of Theorem \ref{thm:upper}]
Let $s \in \mathrm{conv}_k(E_n(\delta))$. Then by Proposition \ref{prop:comma}, $\mu_s \in B(\mathcal{M}_k,\delta)$. It follows from Lemma \ref{lem:wasser} that
\begin{align*}
\mathrm{Exc}_1(\mu_s) \leq  \sqrt{\delta} + \sup_{ \nu \in \mathcal{M}_k } \mathrm{Exc}_{1-\sqrt{\delta}}(\nu). 
\end{align*}
Using Theorem \ref{thm:MK2}, provided $\sqrt{\delta} \leq 1/2$ it follows that
\begin{align*}
\mathrm{Exc}_1(\mu_s) \leq  \sqrt{\delta} + p_{1-\sqrt{\delta}} - c e^{ - Ck^2}. 
\end{align*}
Finally, note that the function $p_w =p(w)$ is decreasing and smooth in $w$. In particular, there is some constant $C_1 \geq 0$ such that $p_{1-\sqrt{\delta}} \leq p_1 + C_1 \sqrt{\delta}$ for all $0 \leq \delta \leq 1/2$. It follows that with $C = C_1 + 1$ we have
\begin{align*}
\mathrm{Exc}_1(\mu_s) \leq p_{1} + C \sqrt{\delta} - c e^{ - Ck^2},
\end{align*}
thereby completing the proof of Theorem \ref{thm:upper}.

\end{proof}

\subsection{Lower bounds for exceedances}
Recall from Section \ref{sec:upper} that for $w > 0$ we have $y_w \in \mathbb{R}$ and $p_w \in (0,1)$ defined by $\Phi'(y_w)/\Phi(y_w) = w$ and $p_w = \Phi(y_w)$. By symmetry, we also have $- y_w = \Phi^{-1}(1-p_w)$. In particular, $p_w$ is chosen so that
\begin{align*}
\mathbf{E}[ Z | Z > \Phi^{-1}(1-p_w) ] = w.
\end{align*}
In other words, the conditional expectation of a Gaussian random variable given that it lies in its upper $p_w$-quantile is $w$. 

Suppose now that under a probability measure $P_\rho$, the variables $(Z_1,\ldots,Z_d)$ are independent and each has the law of a standard Gaussian random variable conditioned to exceed $\Phi^{-1}(1-p_1+\rho)$. Write $E_\rho[Z_1]$ for the expectation of $Z_1$ under this probability measure. 
We now study the small-$\rho$ behaviour of $E_\rho[Z_1]$. Of course, then $E_0[Z_1] = \mathbf{E}[ Z | Z > \Phi^{-1}(1-p_1)] = 1$. More generally, using \eqref{eq:rel1} to obtain the second equality below we have 
\begin{align*}
E_\rho[Z_1] = \frac{ \int_{ \Phi^{-1}(1-p_1+\rho) }^\infty u \gamma(u)\mathrm{d}u}{ p_1 - \rho } = \frac{  \Phi'( \Phi^{-1}(1-p_1+\rho) )}{ p_1 - \rho}.
\end{align*}
We now seek to differentiate $E_\rho[Z]$ with respect to $\rho$. Set $f(\rho) := \Phi^{-1}(1 - p_1+\rho)$. Then by the inverse function theorem $f'(\rho) = 1/\Phi'(f(\rho))$. Using this fact to obtain the first equality below, and then $\Phi''(x) = - x \Phi'(x)$ (see \eqref{eq:rel2}) to obtain the second, we have 
\begin{align*}
\frac{\mathrm{d}}{\mathrm{d}\rho} E_\rho[Z_1] &= \frac{ \Phi''(f(\rho)) }{ (p_1 - \rho) \Phi'(f(\rho)) } + \frac{1}{(p_1-\rho)^2} \Phi'(f(\rho))\\
&= - \frac{ f(\rho)}{ p_1 - \rho } + \frac{1}{(p_1 - \rho)^2} \Phi'(f(\rho)).
\end{align*}
Using \eqref{eq:py} we have $f(0) = \Phi^{-1}(1-p_1) = -y_1$ and $\Phi'(-y_1) = \Phi'(y_1) = \Phi(y_1) = p_1$. It follows that 
\begin{align} \label{eq:rhomove}
\frac{\mathrm{d}}{\mathrm{d}\rho} E_\rho[Z_1]|_{\rho=0} = \frac{1+y_1}{p_1} =: \kappa > 0.
\end{align}
A heuristic explanation for \eqref{eq:rhomove} is that as $\rho$ increases a small amount, we are replacing mass near the lower tail, which occurs at $-y_1$, with mass in the rest of the distribution, which has average $1$. Thus, the rate of change in expectation per change in $\rho$ is $\kappa := (1 - (-y_1))/p_1$. It follows from \eqref{eq:rhomove} that for small $\rho$,
\begin{align*}
E_\rho[Z_1] = 1 + \kappa \rho + O(\rho^2).
\end{align*}

With this picture in mind, we have the following lemma.

\begin{lem} \label{lem:boxdiag}
Under a probability law $P_\rho$, let $Z_1,\ldots,Z_d$ be independent random variables that have the law of a standard Gaussian random variable conditioned to exceed $\Phi^{-1}(1 - p_1 + \rho)$. 

Then there exists $c > 0$ such that for all $0 < \rho < c$ we have 
\begin{align} \label{eq:cherny}
P_\rho \left( \frac{Z_1 + \cdots + Z_d}{d} \leq 1 + \frac{\kappa}{2} \rho \right) \leq e^{ - c \rho^2 d}.
\end{align}
\end{lem}

Since this lemma is a variant on standard tail bounds for i.i.d.\ random variables with exponential moments, we will sketch the proof here, and relegate its full proof to Appendix A. 

\begin{proof}[Sketch proof of Lemma \ref{lem:boxdiag}]
Let $\rho$ be small. By the central limit theorem, under $P_\rho$, for large $d$, the law of $d^{-1}(Z_1+\cdots + Z_d)$ concentrates around $1 + \kappa \rho$ with fluctuations of the order $( 1 +O(\rho))V /\sqrt{d}$ where $V$ is the variance of $Z_1$ under $P_0$. Since $Z_1$ admits exponential moments of all orders under $P_\rho$, it follows from a standard Chernoff-type argument that there is a constant $c> 0$ such that for all $0 < r < c$ we have
\begin{align*}
P_\rho \left( \frac{Z_1 + \cdots + Z_d}{d} - (1+\kappa \rho + O(\rho^2)) \leq - r \right) \leq e^{ - c d r^2}.
\end{align*}
Setting $r = \frac{\kappa}{2}\rho$ and rearranging, we obtain \eqref{eq:cherny} (with a possibly different $c>0$). 
\end{proof}
The rigorous proof of Lemma \ref{lem:boxdiag} is of course more delicate than the sketch presented above suggests, as the law $P_\rho$ of the random variables $Z_1,\ldots,Z_d$ itself also depends on $\rho$.

We close this section with a corollary that provides a lower bound for the maximal exceedance of a probability measure $\mu$ in $\mathcal{M}_d$. While we will not use this result explicitly in the remainder of the article, we record it here as it acts as a complement to Theorem \ref{thm:MK}. This corollary uses a box-product coupling of Gaussian random variables. We recall from the introduction that the box-product coupling at $q$ is the coupling $\Pi$ of $d$ Gaussian random variables with probability density function on $\mathbb{R}^d$ given by 
\begin{align} \label{eq:boxproduct}
\Pi(x) := \left( (1-p)^{-(d-1)} \mathrm{1}_{\{ x_i < q ~\forall i=1,\ldots,d \}} + p^{-(d-1)} \mathrm{1}_{\{ x_i \geq q ~\forall i=1,\ldots,d \}}   \right) \gamma_d(x),
\end{align}
where $1-p := \Phi(q)$ and $\gamma_d(x) = (2\pi)^{-d/2}e^{ - (x_1^2+\cdots+x_d^2)/2}$ is the standard $d$-dimensional Gaussian density.

\begin{cor} \label{cor:underbound}
There are constants $c, C > 0$ such that if we set $\rho_d := c \sqrt{\log(d)/d}$ and let $\Pi$ denote the box-product coupling at $\Phi^{-1}(1 - p_1 + \rho_d)$, then
\begin{align*}
\mathbf{P}_\Pi \left( \frac{Z_1 + \cdots + Z_d}{d} \geq 1 \right) \geq p_1 - C \frac{ \log (d) }{ \sqrt{d} } .
\end{align*} 
\end{cor}

\begin{proof}
Let $0 < \rho < c$, where $c$ is as in the statement of Lemma \ref{lem:boxdiag}. Let $\Pi_\rho$ be the box-product coupling at $\Phi^{-1}(1 - p_1 + \rho)$. We have $\mathbf{P}_{\Pi_\rho} ( Z_1 \geq \Phi^{-1}(1 - p_1 + \rho ) ) = p_1 -\rho$. Moreover, under $\mathbf{P}_{\Pi_\rho}$ and conditional on $\{ Z_1 \geq \Phi^{-1}(1 - p_1 + \rho ) \}$, $(Z_1,\ldots,Z_d)$ have the law of $P_\rho$. Using Lemma \ref{lem:boxdiag} to obtain the penultimate inequality below we have
\begin{align*}
\mathbf{P}_{\Pi_\rho} ( d^{-1}(Z_1 + \cdots + Z_d) \geq 1 ) &\geq (p_1-\rho_d) P_\rho \left(  d^{-1}(Z_1 + \cdots + Z_d) \geq 1 \right) \\
&\geq (p_1-\rho_d) P_\rho \left(  d^{-1}(Z_1 + \cdots + Z_d) \geq 1 + \frac{\kappa}{2} \rho \right) \\
& \geq ( p_1 - \rho)(1 - e^{ - c \rho^2 d } ) \geq p_1 - \rho - e^{ - c\rho^2 d} .
\end{align*} 
Now by choosing $\rho = \rho_d = c \sqrt{\frac{\log(d)}{d} }$ for a suitable constant $c>0$, we obtain
\begin{align*}
\mathbf{P}_{\Pi_{\rho_d}}( d^{-1}(Z_1 + \cdots + Z_d) \geq 1)  \geq p_1 - C \sqrt{\frac{\log(d)}{d}}
\end{align*}
for some constant $C$, as required.
\end{proof}

\section{Proof of Theorem \ref{thm:lower}} \label{sec:lower}

\subsection{Large sets of ordering permutations}

Let $\bm \sigma := (\sigma_1,\ldots,\sigma_d)$ be a $d$-tuple of permutations in $\mathcal{S}_n$. Recall from \eqref{eq:cd} that the coupling density on $[0,1]^d$ associated with $\sigma_1,\ldots,\sigma_d$ is the probability density function $C^{\bm \sigma}:[0,1]^d \to [0,\infty)$ given by 
 \begin{align} \label{eq:cd2}
C^{\bm \sigma}(r) := n^{d-1}\sum_{j=1}^n\prod_{i=1}^d\mathrm{1} \left\{ r_i \in \left[ \frac{\sigma^{-1}_i(j)-1}{n} , \frac{\sigma^{-1}_i(j)}{n} \right) \right\}.
\end{align} 

Recall from \eqref{eq:CC} that each coupling is associated with a copula. The copula associated with the box-product coupling at $q = \Phi^{-1}(1-p)$ (see \eqref{eq:boxproduct}) has probability density function $\pi:[0,1]^d \to [0,\infty)$ given by 
\begin{align} \label{eq:boxcop}
\pi_p(r) = (1-p)^{-(d-1)}\mathrm{1}\{ r \in [0,1-p)^d \} + p^{-(d-1)} \mathrm{1} \{ r \in [1-p,1]^d \}.
\end{align} 

Given a subset $B$ of $\{1,\ldots,n\}$, define 
\begin{align*}
\mathcal{S}_n(B) := \{ \sigma \in \mathcal{S}_n : \sigma(\{ n-m+1,\ldots,n\} ) = B \},
\end{align*}
where $m = \# B$ is the cardinality of $B$. 

For large $n$, we are interested in finding $d$-tuples of permutations $\bm \sigma = (\sigma_1,\ldots,\sigma_d)$ for which the coupling measure $C^{\bm \sigma}(r)\mathrm{d}r$ approximates $\pi_p(r)\mathrm{d}r$. As an initial step, we have the following lemma.

\begin{lem}
Let $p \in (0,1)$ such that $m = pn$ is an integer.
The measure $C^{\bm \sigma}$ is supported on the disjoint union $[0,1-p)^d \cup [1-p,1]^d$ if and only if there exists a subset $B \subseteq \{1,\ldots,n\}$ of cardinality $m$ such that $\sigma_i \in \mathcal{S}_n(B)$ for each $i = 1,\ldots,d$.
\end{lem}

\begin{proof}
By examining \eqref{eq:cd2} we see that $C^{\bm \sigma}$ is supported on $[0,1-p)^d \cup [1-p,1]^d$ if and only if for each $1 \leq j \leq n$ we have either 
\begin{align*}
\sigma_i^{-1}(j) \leq n-m \quad \forall ~i=1,\ldots,d \qquad \text{or} \qquad \sigma_i^{-1}(j) > n-m \quad  \forall ~i=1,\ldots,d .
\end{align*}
Equivalently, there exists a subset $B$  of cardinality $m$ such that each $\sigma_i$ maps $\{n-m+1,\ldots,n\}$ to $B$, that is, each $\sigma_i \in \mathcal{S}_n(B)$ for every $i$. 
\end{proof}

\subsection{Large reordering sets}
Recall that if $s = (s^1,\ldots,s^n) \in \mathbb{R}^n$ is a vector and $\sigma \in \mathcal{S}_n$ is a permutation, we write $\sigma s = (s^{\sigma(1)},\ldots,s^{\sigma(n)})$. Note that reordering the coordinates does not change the empirical coordinate measure, i.e.\ $\mu_{\sigma s} = \mu_s$. In particular, $s \in E_n(\delta) \iff \sigma s \in E_n(\delta)$. 

Recall that $W_n := \{ t = (t^1,\ldots,t^n) \in \mathbb{R}^n : t^1 \leq \cdots \leq t^n \}$ is the subset of $\mathbb{R}^n$ consisting of vectors with nondecreasing coordinates. Given a subset $J$ of $\mathbb{R}^n$ and an element $t \in W_n$, we are interested in the occurrences of some reordering of the element $t$ in $J$. Define
\begin{align*}
N(t,J) := \# \{ \sigma \in \mathcal{S}_n : \sigma t \in J \} \quad \text{and} \quad N_B(t,J) := \# \{ \sigma \in \mathcal{S}_n(B) : \sigma t \in J \}. 
\end{align*}
Note that for each $\sigma \in \mathcal{S}_n$ and each $0 \leq m \leq n$, there exists a unique set $B$ of cardinality $m$ such that $\sigma \in \mathcal{S}_n(B)$. It follows that for each $0 \leq m \leq n$ we have the identity
\begin{align} \label{eq:Bsum}
\sum_{ \# B = m } N_B(t,J ) = N(t,J),
\end{align}
where the sum is taken over all subsets $B$ of $\{1,\ldots,n\}$ of cardinality $m$.

Observe that by symmetry we have $\gamma_n(W_n) = 1/n!$. (Equivalently, the probability that a random standard Gaussian vector in $n$ dimensions has its coordinates listed in nondecreasing order is $1/n!$.) Moreover,
\begin{align} \label{eq:xx}
\int_{W_n} N(t,J) \gamma_n(\mathrm{d}t) = \gamma_n(J).
\end{align}

Our next result tells us that for any reasonably large subset of $\mathbb{R}^n$, for each $m$ there exists some reasonably large $N_B(t,J)$ with $\# B = m$.

\begin{lem} \label{lem:existence}
Let $J$ be a measurable subset of $\mathbb{R}^n$ and let $0 \leq m \leq n$ be any integer. Then there exists $t \in W_n$ and a $B \subseteq \{1,\ldots,n\}$ of cardinality $m$ such that 
\begin{align*}
N_B(t,J) \geq m!(n-m)! \gamma_n(J).
\end{align*}
\end{lem}

\begin{proof}
Since $\gamma_n(W_n) = 1/n!$, by \eqref{eq:xx} it follows that there exists $t \in W_n$ such that $N(t,J) \geq \gamma_n(J) n!$. Now using \eqref{eq:Bsum} and the fact that there are $\binom{n}{m}$ distinct subsets $B$ of $\{1,\ldots,n\}$ of cardinality $m$, it follows that there exists some $B\subseteq \{1,\ldots,n\}$ of cardinality $m$ such that $N_B(t,J) \geq m!(n-m)! \gamma_n(J) $. 
\end{proof}


\subsection{Mean and variance estimates for random coupling measures}
In the course of our proof of Theorem \ref{thm:lower}, we will need to appeal to concentration properties of random coupling measures associated with random $d$-tuples of permutations. We begin with the following lemma:

 \begin{lem} \label{lem:pc}
 Let $d \leq m$. Let $(\tau_1,\ldots,\tau_d)$ be a $d$-tuple of independent random permutations, such that each $\tau_i$ is uniformly distributed on $\mathcal{S}_m$. Consider the associated random density \begin{align*}
 C^{\bm \tau}(r) := m^{d-1}\sum_{j=1}^m \prod_{i=1}^d \mathrm{1}\left\{ r_i \in  \left[ \frac{\tau_i^{-1}(j)-1}{m},\frac{\tau_i^{-1}(j)}{m} \right) \right\}
 \end{align*}
 on $[0,1]^d$. For measurable $A \subseteq [0,1]^d$ let $C^{\bm \tau}(A) := \int_A C^{ \bm \tau}(r) \mathrm{d}r$ be the associated measure of $A$, and let $|A|$ denote its Lebesgue measure. Then the expectation and variance of $C^{\bm \tau}(A)$ satisfy
 \begin{align} \label{eq:smart}
 \mathbf{E}[ C^{\bm \tau}(A) ] = |A| \quad \text{and} \quad \mathbf{E}[ (C^{ \bm \tau}(A) - |A|)^2 ] \leq C \frac{d}{m} |A|,
 \end{align}
 where $C$ is a universal constant not depending on $m, d$ or $A$.
 \end{lem}
 \begin{proof}
 We begin by proving $\mathbf{E}[ C^{\bm \tau}(A) ] = |A| $. Given $x = (x_1,\ldots,x_d) \in [m]^d := \{1,\ldots,m\}^d$, write 
 \begin{align*}
 A_x := \left\{ r \in A : r_i \in \left[ \frac{x_i-1}{m}, \frac{x_i}{m} \right) ~ \forall 1 \leq i \leq d \right\},
 \end{align*}
 so that $A$ can be written as a disjoint union $A = \sqcup_{x \in [m]^d} A_x$, and thus $\sum_{x \in [m]^d } C^{\bm \tau}(A_x) = C^{\bm \tau}(A)$. Let $a_x = |A_x|$ be its Lebesgue measure. Write $\{ \bm \tau^{-1}(j) = x \} :=  \{ \tau_1^{-1}(j) = x_1,\ldots, \tau_d^{-1}(j) = x_d \}$. Then for each $1 \leq j \leq m$ and each $x \in [m]^d$ we have $\mathbf{P}(  \bm \tau^{-1}(j) = x  ) = m^{-d}$. Using \eqref{eq:cd2} we have  
 \begin{align*}
 \mathbf{E}[ C^{\bm \tau}(A_x)] = m^{d-1} a_x \sum_{j=1}^m \mathbf{P}(  \bm \tau^{-1}(j) = x  ) = a_x.
 \end{align*}
 By linearity, it follows that $\mathbf{E}[ C^{\bm \tau}(A) ] = \sum_{x \in [m]^d} a_x = |A|$. 

 We turn to the calculation of the variance of $C^{\bm \tau}(A)$. Here we have 
 \begin{align} \label{eq:pintor}
 \mathbf{E}[ C^{\bm \tau}(A)^2 ] &= \sum_{ x, y \in [m]^d} \mathbf{E}[ C^{\bm \tau}(A_x) C^{\bm \tau}(A_y) ] \nonumber \\
 &= \sum_{x, y \in [m]^d} \sum_{1 \leq j,k \leq m} m^{2(d-1)} a_xa_y \mathbf{P} \left( \bm \tau^{-1}(j) = x, \bm \tau^{-1}(k) =  y  \right).
 \end{align}
 
 For $x,y \in [m]^d$, write $x \sim y$ if there exists $1 \leq i \leq d$ such that $x_i = y_i$, and write $x \nsim y$ otherwise. (Note $\sim$ is not an equivalence relation.) Then for all $x,y \in [m]^d$ and $1 \leq j,k \leq m$ we have  
 \begin{align} \label{eq:pintor2}
  \mathbf{P} \left( \bm \tau^{-1}(j) = x, \bm \tau^{-1}(k) = y  \right) =
 \begin{cases}
 m^{-d} \quad &\text{if $j = k,x=y$},\\
 0 \quad &\text{if $j=k, x \neq y$},\\
  m^{-d}(m-1)^{-d} \quad &\text{if $j \neq k$, $x \nsim y$},\\
 0 \qquad &\text{if $j \neq k$, $x \sim y$}.
 \end{cases}
 \end{align}
 
 Using \eqref{eq:pintor2} in \eqref{eq:pintor} we have 
 \begin{align} \label{eq:pintor3}
 \mathbf{E}[ C^{\bm \tau}(A)^2 ] &=  \sum_{j=1}^m m^{2(d-1)} \sum_{x \in [m]^d} a_x^2 m^{-d} + \sum_{1 \leq j \neq k  \leq m} m^{2(d-1)} \sum_{ x \nsim y } a_x a_y m^{-d}(m-1)^{-d} \nonumber \\
 &=   m^{d-1} \sum_{x \in [m]^d} a_x^2 + (1-1/m)^{-(d-1)} \sum_{ x \nsim y} a_x a_y ,
 \end{align}
 where $\sum_{ x \nsim y}$ denotes the sum taken over all pairs of elements $x$ and $y$ in $[m]^d$ satisfying the restriction that $x \nsim y$.  Lifting this restriction, and using $\sum_{x \in [m]^d} a_x = |A|$ together with the fact that $a_x \leq m^{-d}$, we obtain from \eqref{eq:pintor3}
  
 the upper bound
 \begin{align} \label{eq:pintor4}
 \mathbf{E}[ C^{\bm \tau}(A)^2 ] \leq \frac{1}{m}|A| + (1 - 1/m)^{-(d-1)} |A|^2.
 \end{align}
 Since $d/m \leq 1$, we have $(1-1/m)^{-(d-1)} \leq 1 + C d/m$. Using this fact in \eqref{eq:pintor4}, subtracting $\mathbf{E}[C^{\bm \tau}(A)]^2 = |A|^2$, and then using the fact that $|A| \leq 1$ to obtain the final inequality below we have
  \begin{align} \label{eq:pintor5}
 \mathbf{E}[ (C^{\bm \tau}(A)-|A|)^2 ] \leq \frac{1}{m}|A| + C \frac{d}{m} |A|^2 \leq C' \frac{d}{m} |A|,
 \end{align}
 completing the proof of \eqref{eq:smart}.

 \end{proof}

Using Chebyshev's inequality, we are able to prove the following bound controlling the deviations of the random variable $C^{\bm \tau}(A)$:
\begin{cor} \label{cor:pc}
Under the conditions of Lemma \ref{lem:pc} we have 
\begin{align*}
\mathbf{P} ( C^{\bm \tau}(A)  \geq 2 \sqrt{|A|} ) \leq C \frac{d}{m}. 
\end{align*}
\end{cor}
\begin{proof}
Using Chebyshev's inequality to obtain the first inequality below, and then \eqref{eq:smart} to obtain the second, we have 
\begin{align} \label{eq:tcalm}
\mathbf{P}\left( |C^{\bm \tau}(A) - |A| | \geq t \right) \leq \frac{1}{t^2} \mathbf{E}[ (C^{\bm \tau}(A) - |A| )^2 ] \leq  C \frac{d}{m} \frac{|A|}{t^2}. 
\end{align}
Now using the fact that $\sqrt{|A|} \geq |A|$ to obtain the first inequality below, then \eqref{eq:tcalm} with $t = \sqrt{|A|}$ to obtain the second, we have 
\begin{align*}
\mathbf{P}\left( C^{\bm \tau}(A) \geq 2 \sqrt{|A|} \right) \leq \mathbf{P}\left( | C^{\bm \tau}(A) - |A| | \geq \sqrt{|A|} \right) \leq C \frac{d}{m},
\end{align*}
completing the proof.
\end{proof}

We close this section with a couple of remarks on relationships between random coupling measures associated with independent uniform elements of $\mathcal{S}_n(B)$ and $\mathcal{S}_m$. Consider the affine mapping $T:[1-p,1]^d \to [0,1]^d$ that sends the $i^{\text{th}}$ coordinate $r_i \in [1-p,1]$ to $\frac{1}{p}(r_i - (1-p))$. Given a subset $\Gamma$ of $[1-p,1]^d$, write $\tilde{\Gamma} := T(\Gamma)$ for the image of $\Gamma$ under this map. 

We make the following observation:
\begin{remark} \label{rem:unif}
Suppose that $\sigma_1,\ldots,\sigma_d$ are independent random permutations that are uniformly distributed on $\mathcal{S}_n(B)$ for some subset $B$ of $\{1,\ldots,n\}$ of cardinality $m=pn$. Then given any subset $\Gamma$ of $[1-p,1]^d$ we have the equality in distribution 
\begin{align} \label{eq:dist}
C^{ \bm \sigma}( \Gamma) \stackrel{(d)}{=} p ~C^{\bm \tau}(\tilde{\Gamma})
,
\end{align}
where $\bm \tau = (\tau_1,\ldots,\tau_d)$ is a $d$-tuple of independent uniform random permutations in $\mathcal{S}_m$. 
\end{remark}
This remark follows from the fact that if $\sigma_1,\ldots,\sigma_d$ are independent uniform random elements of $\mathcal{S}_n(B)$, then when restricted to $\{n-m+1,\ldots,n\}$, the permutations $\sigma_2^{-1} \sigma_1,\ldots,\sigma_d^{-1}\sigma_1$ are independent and uniformly distributed on the set of permutations on $\{n-m+1,\ldots,n\}$.

\subsection{Exceedances of random probability measures}
\label{sec:randomswitch}
Let $\bm \sigma := (\sigma_1,\ldots,\sigma_d)$ be a (possibly random) $d$-tuple of permutations in $\mathcal{S}_n$. Given a probability measure $\mu$ on $\mathbb{R}$ with quantile function $Q$, we write $\mu^{\bm \sigma}$ for the new probability measure on $\mathbb{R}$ that is the law of the random variable
\begin{align*}
\frac{1}{d}( Q(U_1) + \cdots + Q(U_d))
\end{align*}
where $(U_1,\ldots,U_d)$ is distributed according to $C^{\bm \sigma}$. If $\bm \sigma$ is a random $d$-tuple of permutations, then $\mu^{\bm \sigma}$ is a random probability measure on $\mathbb{R}$. 

Recall that $\sigma s = (s^{\sigma(1)},\ldots,s^{\sigma(n)})$. 
Suppose that $t =(t^1,\ldots,t^n)$ is a vector with empirical coordinate measure $\mu_t$. Then by Lemma \ref{lem:probrep}, $\mu_t^{\bm \sigma}$ is the empirical coordinate measure of the vector $\frac{1}{d} (\sigma_1t + \cdots + \sigma_d t)$. That is,
\begin{align} \label{eq:trotate}
\mu_t^{\bm \sigma} = \mu_{d^{-1}(\sigma_1 t+ \cdots + \sigma_d t )}.
\end{align}

Our next result describes the likely exceedances of the random probability measure $\mu^{\bm \sigma}$ associated with a $d$-tuple of random permutations sampled from some $\mathcal{S}_n(B)$ when $\mu$ is close to the standard Gaussian law $\gamma$.

\begin{prop} \label{prop:random}
Let $B$ be a subset of $\{1,\ldots,n\}$ of cardinality $m = (p_1-\rho)n$, where $0 < \rho < c$. Let $\delta$ be such that $\sqrt{\delta} \leq (\kappa/2) \rho$. (Here $c,\kappa$ are as in the statement of Lemma \ref{lem:boxdiag}.)

Let $\mu$ be a probability measure on $\mathbb{R}$ satisfying $W(\mu,\gamma) \leq \delta$. 

Let $\sigma_1,\ldots,\sigma_d$ be a $d$-tuple of independent random permutations that are uniformly distributed on $\mathcal{S}_n(B)$. Then 
\begin{align*}
\{ \mathrm{Exc}_1(\mu^{\bm \sigma}) \geq p_1 - C ( \rho + e^{ - \frac{c}{2}\rho^2 d})\} \quad \text{with probability at least $1-Cd/n$}.
\end{align*}
\end{prop}

\begin{proof}
By Lemma \ref{lem:stable}, if $\mu$ and $\tilde{\mu}$ are probability measures on $\mathbb{R}$, then 
\begin{align*}
W(\mu^{\bm \sigma},\tilde{\mu}^{\bm \sigma}) \leq W(\mu,\tilde{\mu}).
\end{align*}
In particular, $W(\mu^{\bm \sigma}, \gamma^{\bm \sigma}) \leq \delta$. Now applying Lemma \ref{lem:wasser} we have
\begin{align} \label{eq:w1}
\mathrm{Exc}_1(\mu^{\bm \sigma}) \geq \mathrm{Exc}_{1+\sqrt{\delta}}(\gamma^{\bm \sigma}) - \sqrt{\delta}.
\end{align}
Now 
\begin{align} \label{eq:w2}
\mathrm{Exc}_{1 + \sqrt{\delta}}( \gamma^{\bm \sigma}) &:= \int_{[0,1]^d}  \mathrm{1} \left\{ d^{-1}(\Phi^{-1}(r_1) + \cdots + \Phi^{-1}(r_d)) \geq 1 + \sqrt{\delta} \right\} C^{\bm \sigma}(r) \mathrm{d}r \nonumber \\
& \geq \int_{[1-p_1+\rho,1]^d} \mathrm{1} \left\{ d^{-1}(\Phi^{-1}(r_1) + \cdots + \Phi^{-1}(r_d)) \geq 1 + \sqrt{\delta} \right\} C^{\bm \sigma}(r)  \mathrm{d}r \nonumber \\
& = p_1 - \rho - C^{ \bm \sigma}( \Gamma_\delta),
\end{align}
where
\begin{align*}
\Gamma_\delta := \{ r \in [1-p_1+\rho,1]^d : d^{-1}(\Phi^{-1}(r_1) + \cdots + \Phi^{-1}(r_d)) <  1 + \sqrt{\delta} \},
\end{align*}
and we have used the fact that $C^{\bm \sigma}([1-p_1+\rho,1]^d) = p_1 - \rho$.

By \eqref{eq:w1}, \eqref{eq:w2}, and the fact that $\sqrt{\delta} \leq (\kappa/2)\rho$, we have
\begin{align} \label{eq:w10}
\mathrm{Exc}_1(\mu^{\bm \sigma}) \geq p_1 - C \rho - C^{\bm \sigma}(\Gamma_\delta),
\end{align}
for some sufficiently large universal $C>0$.

Setting $p = p_1 - \rho$ in Remark \ref{rem:unif}, we have 
\begin{align} \label{eq:dist2}
C^{ \bm \sigma}( \Gamma_\delta) \stackrel{(d)}{=} (p_1 -\rho) C^{\bm \tau}(\tilde{\Gamma}_\delta)
,
\end{align}
where $\bm \tau = (\tau_1,\ldots,\tau_d)$ is a $d$-tuple of independent random permutations uniformly distributed on $\mathcal{S}_m$, and $\tilde{\Gamma}_\delta$ is the image of $\Gamma_\delta$ under the affine map $[1-p_1+\rho,1]^d \mapsto [0,1]^d$. 

Now observe that 
\begin{align*}
|\tilde{\Gamma}_\delta| = P_\rho ( d^{-1}(Z_1 + \cdots + Z_d) \leq 1 + \sqrt{\delta} ),
\end{align*}
where, as in the statement of Lemma \ref{lem:boxdiag}, $P_\rho$ governs a $d$-tuple of independent random variables $(Z_1,\ldots,Z_d)$ each of which has the law of a standard Gaussian random variable conditioned to exceed $\Phi^{-1}(1 - p_1+\rho)$.

Using the fact that $\sqrt{\delta} \leq \frac{\kappa}{2} \rho$, by Lemma \ref{lem:boxdiag} we have 
\begin{align} \label{eq:tildy}
|\tilde{\Gamma}_\delta| \leq e^{ - c \rho^2 d}.
\end{align}

Using \eqref{eq:dist2} to obtain the first inequality below, \eqref{eq:tildy} to obtain the second, and then applying Corollary \ref{cor:pc} to $C^{\bm \tau}(\tilde{\Gamma}_\delta)$ to obtain the third, we have
\begin{align} \label{eq:neartime}
\mathbf{P} \left( C^{\bm \sigma}(\Gamma_\delta) \geq 2 e^{ - \frac{c}{2} \rho^2 d } \right) \leq \mathbf{P} \left( C^{\bm \tau}(\tilde{\Gamma}_\delta) \geq 2 e^{ - \frac{c}{2} \rho^2 d } \right) \leq \mathbf{P} \left( C^{\bm \tau}(\tilde{\Gamma}_\delta) \geq 2 \sqrt{|\tilde{\Gamma}_\delta|} \right) \leq C \frac{d}{n},
\end{align}
where we have used the fact that $m > c n$ in the final inequality above.

Combining \eqref{eq:neartime} and \eqref{eq:w10}, and using the fact that $\sqrt{\delta} \leq (\kappa/2) \rho$, we obtain the result with $C = 1 + \frac{\kappa}{2}$. 
\end{proof}

\subsection{Proof of Theorem \ref{thm:lower}}
We are now ready to wrap our work together to prove Theorem \ref{thm:lower}, which states that if $K$ is a convex subset of $\mathbb{R}^n$ with $\gamma_n(K) \geq \varepsilon$, there exists a vector $u$ in $K$ whose empirical coordinate measure satisfies $\mathrm{Exc}_1(\mu_u) \geq p_1 - C_\varepsilon \log(n)^{-1/3}$. 

We outline the proof strategy. Given an element $t$ of $W_n$, and a random $d$-tuple of permutations in some $\mathcal{S}_n(B)$, we define a vector
\begin{align*}
u := \frac{1}{d} ( \sigma_1t + \cdots + \sigma_d t).
\end{align*}
We are going to show in the course of the proof that for a certain choice of $t$ and $B$, the vector $u$ lies in $K$ with not too small a probability. We are also going to show that the exceedance of this random vector is high with high probability. Combining these two observations, we conclude that there must exist a vector $u$ in $K$ whose exceedance is high.

\begin{proof}[Proof of Theorem \ref{thm:lower}]
Let $n \in \mathbb{N}$ and $\varepsilon \in (0,1/2]$. We note from the statement that we may assume without loss of generality that $n \geq C/\varepsilon$ for a sufficiently large universal constant $C > 0$. We will consider a convex subset $K$ of $\mathbb{R}^n$ whose Gaussian measure satisfies $\gamma_n(K) \geq \varepsilon$.

To set up our proof, we define an integer 
\begin{align} \label{eq:ddef}
d = d_{n,\varepsilon} = \lfloor c \log(n) / \log(1/\varepsilon) \rfloor,
\end{align}
where $c$ is a sufficiently small constant to be determined below. We now define 
 \begin{align} \label{eq:rhodef}
 \rho = \rho_{n,\varepsilon} := \inf \{ r \geq C \sqrt{\log(d_{n,\varepsilon})/d_{n,\varepsilon}} : \text{$n(p_1 - r)$ is an integer} \},
 \end{align}
 where $C$ is a sufficiently large constant also to be determined below. Set 
 \begin{align*}
  m := m_{n,\varepsilon} = n(p_1 - \rho_{n,\varepsilon})
  \end{align*}
 for the remainder of the proof.

Using the definition \eqref{eq:rhodef} to obtain the first inequality below, and then \eqref{eq:ddef} for the second, we have 
\begin{align*}
\rho_{n,\varepsilon} \leq C d_{n,\varepsilon}^{-1/3} \leq C_\varepsilon \log(n)^{-1/3}
\end{align*}
where $C_\varepsilon = C \log(1/\varepsilon)^{1/3}$ for some universal $C> 0$. 

To lighten notation we will write $\rho = \rho_{n,\varepsilon}, d = d_{n,\varepsilon}$, and $m = m_{n,\varepsilon}$ for the remainder of the proof.

Recall $\delta_n := C n^{-1/3}$. 
We saw in Proposition \ref{prop:DKW} that the set $A_n = E_n(\delta_n)$ of vectors $s$ in $\mathbb{R}^n$ whose empirical coordinate measure satisfies $W(\mu_s,\gamma)\leq \delta_n$ satisfies $\gamma_n(A_n) \geq 1 - C/n$. Since $n \geq C/\varepsilon$, by Proposition \ref{prop:DKW} we have $\gamma_n(K \cap A_n) \geq \varepsilon/2$. By Lemma \ref{lem:existence} there exists some $t \in W_n$ and some subset $B$ of cardinality $m = (p_1-\rho)n$ such that $N_B(t, K \cap A_n) \geq (\varepsilon/2)m!(n-m)!$. 

Note that $N_B(t, K \cap A_n) > 0$ implies $\sigma t \in A_n$ for some $\sigma$, which of course implies $t$ itself lies in $A_n$ since $\mu_{\sigma t} = \mu_t$. In particular, $W(\mu_t,\gamma) \leq \delta_n$. 

With this choice of $B$, for the remainder of the proof let $(\sigma_1,\ldots,\sigma_d)$ be a $d$-tuple of independent random permutations each of which is uniformly distributed on $\mathcal{S}_n(B)$. Then each $\sigma_it$ is a random vector in $\mathbb{R}^n$. 

Now on the one hand since $N_B(t, K \cap A_n) \geq (\varepsilon/2)m!(n-m)!$ and $\# \mathcal{S}_n(B) = m!(n-m)!$ we have
\begin{align} \label{eq:R1}
\{ \sigma_i t \text{ lies in } K \cap A_n \text{ for each } 1 \leq i \leq d\}  \text{ has probability at least } (\varepsilon/2)^d.
\end{align}
On the other hand, we would like to apply Proposition \ref{prop:random}. Bearing in mind the statement of Proposition \ref{prop:random}, we note that with $\rho = \rho_{n,\varepsilon}$ and $d = d_{n,\varepsilon}$, we have
\begin{align} \label{eq:rhorel}
\rho + e^{ - c d^2 \rho}  \leq C' \rho \leq C_\varepsilon \log(n)^{-1/3},
\end{align}
provided the constant $C > 0$ in \eqref{eq:rhodef} is sufficiently large. Also note that if $C$ in \eqref{eq:rhodef} is sufficiently large, then we will have $\sqrt{\delta_n} \leq (\kappa/2)\rho_{n,\varepsilon}$. Moreover, $0 < \rho_{n,\varepsilon} < c$ whenever $n \geq C/\varepsilon$. It follows that the conditions of Proposition \ref{prop:random} are satisfied, and hence 
the random probability measure $\mu_t^{\bm \sigma}$ has
\begin{align} \label{eq:R2} 
\left\{ \mathrm{Exc}_1(\mu_t^{\bm \sigma}) \geq p_1 - C_\varepsilon \log(n)^{-1/3} \right\}  \text{ with probability at least $1 - Cd/n$},
\end{align}
where to obtain \eqref{eq:R2} from the statement of Proposition \ref{prop:random}, we have used \eqref{eq:rhorel}.

Now with $d =d_{n,\varepsilon}$ defined in \eqref{eq:ddef} for a sufficiently small value of $c$, we have $Cd/n < (\varepsilon/2)^d$. It follows that
\begin{align*}
1- Cd/n + (\varepsilon/2)^d > 1,
\end{align*}
and accordingly there must be some $d$-tuple $\bm \sigma = (\sigma_1,\ldots,\sigma_d)$ of elements of $\mathcal{S}_n(B)$ such that the events in \eqref{eq:R1} and \eqref{eq:R2} both happen simultaneously. In other words, there exists a $d$-tuple $\bm \sigma = (\sigma_1,\ldots,\sigma_d)$ of permutations such that we have both 
\begin{align} \label{eq:membership}
\sigma_it \in K \cap A_n \text{ for every $i = 1,\ldots,d$}
\end{align}
and
\begin{align} \label{eq:membership2}
\mathrm{Exc}_1(\mu_t^{\bm \sigma}) \geq p_1 - C_\varepsilon \log(n)^{-1/3}.
\end{align}

It is at precisely this stage that we use the convexity of $K$. Now, by \eqref{eq:membership} since each $\sigma_it$ lies in $K$ and $K$ is convex, the vector
\begin{align*}
u := d^{-1}(\sigma_1 t+ \cdots + \sigma_dt) \text{ lies in $K$}.
\end{align*} 
On the other hand, recall that by \eqref{eq:trotate} we have $\mu_t^{\bm \sigma} = \mu_{ d^{-1}(\sigma_1 t + \cdots + \sigma_d t)} = \mu_u$. It thus follows from \eqref{eq:membership2} that there exists $u \in K$ such that
\begin{align*}
\mathrm{Exc}_1(\mu_u) \geq p_1 - C_\varepsilon \log(n)^{-1/3}.
\end{align*}
That completes the proof of Theorem \ref{thm:lower}.

\end{proof}

\appendix
\section{Proof of Lemma \ref{lem:boxdiag}} 
In this appendix we will present the rigorous proof of Lemma \ref{lem:boxdiag}. For this purpose we will require formulas for all first- and second-order mixed partial derivatives evaluated at $(0,0)$ of the function
\begin{align} \label{eq:Sdef}
S(\alpha,\rho) := \int_{ \Phi^{-1}(1-p_1+\rho) }^\infty e^{ - \alpha u} \gamma(u)\mathrm{d}u.
\end{align}
Let us introduce the shorthand $S_\alpha := \frac{\partial}{\partial \alpha} S(\alpha,\rho)|_{(\alpha,\rho)=(0,0)}$ and $S_{\alpha \rho}:= \frac{\partial^2}{\partial \alpha \partial \rho } S(\alpha,\rho)|_{(\alpha,\rho)=(0,0)}$, and similarly for partial derivatives evaluated at $(0,0)$. Let $S := S(0,0)$. 

Recall from \eqref{eq:py} that $y_w$ is the unique real number such that $\Phi'(y_w)/\Phi(y_w) = w$, and $p_w = \Phi(y_w)$. 
The following lemma gives formulas for the partial derivatives of $S(\alpha,\rho)$ at zero:

\begin{lem} \label{lem:derivative}
We have
\begin{align} \label{eq:Sform}
S = p_1, \quad S_\rho = -1, \quad S_\alpha = - p_1, \quad S_{\rho \rho} =  0 \quad \text{and} \quad S_{\alpha \rho} = - y_1.
\end{align} 
Moreover, we have
\begin{align} \label{eq:Sform2}
   V :=  S_{\alpha \alpha}/p_1-1 > 0.
    \end{align}
\end{lem}
\begin{proof}

Plainly
\begin{align*}
S = \int_{\Phi^{-1}(1-p_1)}^\infty \gamma(u)\mathrm{d}u = p_1.
\end{align*}
Next, using the definition of $y_1$ to obtain the second equality below, \eqref{eq:rel1} to obtain the third, and then the fact that $y_w$ solves $\Phi'(y_w)/\Phi(y_w) = w$ to obtain the fourth, we have
\begin{align*}
S_\alpha = - \int_{\Phi^{-1}(1-p_1)}^\infty u\gamma(u)\mathrm{d}u = - \int_{-y_1}^\infty u \gamma(u) \mathrm{d}u = - \Phi'(y_1) = - \Phi(y_1) = -p_1.
\end{align*}
As for $S_\rho$, first we search for a more convenient representation for differentiating $S(\alpha,\rho)$ with respect to $\rho$. By setting $f(x) = \mathrm{1}_{\{ x \geq \Phi^{-1}(1 - p_1 + \rho)\}} e^{ - \alpha x}$ in \eqref{eq:qrel}, we may write
\begin{align} \label{eq:qrep}
S(\alpha,\rho) := \int_{1-p_1+\rho}^1 e^{ - \alpha \Phi^{-1}(r) } \mathrm{d}r.
\end{align}
Differentiating \eqref{eq:qrep} with respect to $\rho$ and setting $(\alpha,\rho) = (0,0)$ we obtain
\begin{align*}
S_\rho = -1 \quad \text{and} \quad S_{\rho \rho} = 0.
\end{align*}
Finally, differentiating \eqref{eq:qrep} first with respect to $\rho$, then with respect to $\alpha$, and then subsequently setting $(\alpha,\rho)=(0,0)$, we obtain
\begin{align*}
S_{\alpha \rho} = \Phi^{-1}(1-p_1)  = -y_1,
\end{align*}
completing the last of the derivations of the formulas in \eqref{eq:Sform}.

As for the inequality in \eqref{eq:Sform2}, since $\Phi'(y_1)/\Phi(y_1) = \mathbf{E}[ Z | Z > - y_1 ] = 1$, the quantity 
\begin{align*}
V := S_{\alpha \alpha}/{p_1}-1 = \frac{1}{p_1}\int_{-y_1}^\infty u^2 \gamma(u)\mathrm{d}u - \left( \frac{1}{p_1} \int_{-y_1}^\infty u \gamma(u)\mathrm{d}u \right)^2
\end{align*}
is simply the variance of a standard Gaussian random variable conditioned to exceed $- y_1$, and as such, is positive. 
\end{proof}

We are now ready to proceed with the proof of Lemma \ref{lem:boxdiag}. 
\begin{proof}[Proof of Lemma \ref{lem:boxdiag}]
For any $\alpha \geq 0$ and $w \in \mathbb{R}$ we have the inequality 
\begin{align} \label{eq:cas}
\mathrm{1}\{ d^{-1}(Z_1+\cdots + Z_d) \leq w \} \leq \exp \left\{ \alpha \left( dw - (Z_1+ \cdots + Z_d) \right) \right\}.
\end{align}
Suppose now $0 < \rho < p_1/2$. Then setting $\alpha = \rho x$ and letting $w = 1 + (\kappa/2) \rho$ in \eqref{eq:cas}, and then subsequently taking expectations with respect to $P_\rho$ through this inequality, we obtain the Chernoff inequality
\begin{align} \label{eq:colo0}
P_\rho \left( d^{-1}(Z_1+\cdots+Z_d) \leq 1 + (\kappa/2) \rho \right) \leq \exp \{ d I_{x} (\rho)\},
\end{align}
where with $S(\alpha,\rho)$ as in \eqref{eq:Sdef} we have 
\begin{align} \label{eq:Icx}
I_{x}(\rho) := \rho x ( 1 +(\kappa/2) \rho) + \log \frac{S(\rho x,\rho)}{p_1 - \rho},
\end{align}
and where we note that the ratio 
\begin{align*}
 \frac{S(\alpha,\rho)}{p_1 - \rho} = E_\rho[e^{ - \alpha Z_1} ]
\end{align*}
is the Laplace transform of a standard Gaussian random variable conditioned on exceeding $\Phi^{-1}(1 - p_1 + \rho)$.

We expand $I_{x}(\rho)$ as a power series in $\rho$. 
For each $x$ there exist $c_x, C_x > 0$ such that whenever $0 < \rho < c_x$ we have 
\begin{align} \label{eq:crea}
S(\rho x, \rho) \leq S + \rho ( S_\rho + x S_\alpha ) + \frac{\rho^2}{2} \left( x^2 S_{\alpha \alpha} + 2 x S_{\alpha \rho} +  S_{\rho \rho} \right) + C_x \rho^3,
\end{align}
where we are using the notation of Lemma \ref{lem:derivative}. Using \eqref{eq:Sform} in \eqref{eq:crea} we have
\begin{align} \label{eq:crea2}
S(\rho x, \rho) \leq p_1 + \rho ( -1 - p_1 x ) + \frac{\rho^2}{2} \left( x^2 S_{\alpha \alpha} - 2xy_1 \right) + C_x \rho^3,
\end{align}
where we leave the $S_{\alpha \alpha}$ term as it stands for now. A brief calculation using \eqref{eq:crea2} and \eqref{eq:Icx} tells us that whenever $0 < \rho < c_x$ we have 
\begin{align*}
I_{x}(\rho) \leq \rho^2 \left(- \frac{\kappa}{2} x + \frac{1}{2}Vx^2 \right) + C'_x \rho^3,
\end{align*} 
where as noted in \eqref{eq:Sform2}, $V := S_{\alpha \alpha}/p_1 - 1 > 0$. 
Letting $x_0 = \kappa/2V$ be the value of $x$ for which $- \frac{\kappa}{2} x + \frac{1}{2}Vx^2$ is minimised, for all $0 < \rho < c_{x_0}$ we have 
\begin{align*}
I_{\kappa}(\rho) \leq - \rho^2 \frac{\kappa^2}{8V} + C'_{x_0} \rho^3. 
\end{align*}
In particular, it follows that provided $0 < \rho < c$ for some universal constant $c > 0$ we have
\begin{align} \label{eq:colo} 
I_{x_0}(\rho) \leq - c\rho^2.
\end{align}
Plugging \eqref{eq:colo} into \eqref{eq:colo0} completes the proof of Lemma \ref{lem:boxdiag}.
\end{proof}

\end{document}